\renewcommand{\printindex}[2]{\@restonecoltrue\if@twocolumn\@restonecolfalse\fi
  \columnseprule \z@ \columnsep 35pt
  \newpage \twocolumn[{\Large\bf #2 \vskip4ex}]
  %\markright{\uppercase{#2}}
  \addcontentsline{toc}{chapter}{#2}
  \@input{#1.ind}}
\newtheorem{tr}{Theorem}[section]
\newtheorem*{tr*}{Theorem}
\newtheorem{lemma}[tr]{Lemma}
\newtheorem{pr}[tr]{Proposition}
\newtheorem*{pr*}{Proposition}
\newtheorem{df}[tr]{Definition}%[section]
\newtheorem*{df*}{Definition}
\newtheorem*{not*}{Notation}
\theoremstyle{remark}
\newtheorem{rem}[tr]{Remark}
\newlength{\myevenmargin}
\def\differential{d}
\renewcommand\d\differential
\DeclareMathOperator\im{Im}
\DeclareMathOperator\ord{ord}
\DeclareMathOperator\Aut{Aut}
\DeclareMathOperator\codim{codim}
\DeclareMathOperator\id{id}
\DeclareMathOperator\GL{GL}
\DeclareMathOperator\Bl{Bl}
\DeclareMathOperator\Sing{Sing}
\DeclareMathOperator\Tsp{T}%tangent space
\DeclareMathOperator\Supp{{\rm Supp}}
\def\k{\Bbbk}
\renewcommand\Im\im
\def\bb#1{\mathbb #1}
\def\cal#1{\mathcal #1}
\def\fcal#1{\boldsymbol{\cal{#1}}}
\def\frak#1{\mathfrak{#1}}
\def\ra{\rightarrow}
\def\xra{\xrightarrow}
\def\mto{\mapsto}
\def\pmat#1{\begin{pmatrix}#1\end{pmatrix}}
\def\smat#1{\left(\begin{smallmatrix}#1\end{smallmatrix}\right)}
\def\point#1{\langle #1 \rangle}
\def\UC#1{C_{#1}}
\def\refeq#1{$(\ref{#1})$}
\def\pt{pt}
\def\cTor{{\mathscr T\!\!\mathit{or} } }
\def\P{\bb P}
\def\O{\cal O}
\let\le\leq
\let\ge\geq
\let\star *
\let\subset\subseteq
\let\supset\supseteq
\def\defeq{:=}
\def\eqdef{=:}
\def\iso{\cong}
\def\tilde{\widetilde}
\let\emph\textbf
\newtheorem*{reftR}{Theorem~\refNUMBER}
\newtheorem*{refcoR}{Corollary~\refNUMBER}
\newtheorem*{refpR}{Proposition~\refNUMBER}
\title[On the singular sheaves in the fine Simpson moduli spaces]{On the singular sheaves in the fine Simpson moduli spaces of $1$-dimensional sheaves}
\author{Oleksandr Iena}
\address{University of Luxembourg, Campus Kirchberg\\
Mathematics Research Unit\\
6, rue Richard Coudenhove-Kalergi\\
L-1359 Luxembourg City\\
Grand Duchy of Luxembourg}
\email{oleksandr.iena@uni.lu}
\author{Alain Leytem}
\address{University of Luxembourg, Campus Kirchberg\\
Mathematics Research Unit\\
6, rue Richard Coudenhove-Kalergi\\
L-1359 Luxembourg City\\
Grand Duchy of Luxembourg}
\email{alain.leytem@uni.lu}
\date{}
\subjclass[2010]{14D20}
\keywords{Simpson moduli spaces, coherent sheaves, vector bundles on curves, singular sheaves}
\begin{document}
\begin{abstract}
In the Simpson moduli space $M$ of semi-stable sheaves  with Hilbert polynomial $dm-1$ on a projective plane we study the closed subvariety $M'$ of sheaves that are not locally free on their support. We show that for $d\ge 4$ it is a  singular subvariety of codimension $2$ in $M$. The blow up of $M$ along $M'$ is interpreted as a (partial) modification of $M\setminus M'$ by vector bundles (on support).
\end{abstract}
\maketitle
\tableofcontents

\section{Introduction}
Let $\k$ be an algebraically closed field of characteristic zero, let $V$ be a vector space over $\k$ of dimension $3$, and let $\P_2=\P V$ be the corresponding projective plane. Consider a linear polynomial with integer coefficients  $P(m)=dm+c\in \bb Z[m]$, $d\in \bb Z_{>0}$, $c\in \bb Z$.
Let $M\defeq M_P(\P_2)$ be the Simpson coarse moduli space (cf.~\cite{Simpson1}) of semi-stable sheaves on $\P_2$ with Hilbert polynomial $P$. As shown in~\cite{LePotier}, $M$ is an irreducible locally factorial variety of dimension $d^2+1$, smooth if $\gcd(d, c)=1$.
\subsection*{Singular sheaves}
The sheaves from $M$ are torsion sheaves on $\P_2$, they are supported on curves of degree $d$.
Restricted to their (Fitting) support most of sheaves in $M$ are vector bundles on curves. Such sheaves constitute an open subvariety $M_B$ of $M$. Its complement $M'$, the closed subvariety of sheaves that are not locally free on their support, is in general non-empty.
This way one could consider $M$ as a compactification of $M_B$. We call the sheaves
from the boundary $M'=M\smallsetminus M_B$ \textit{singular}.

The boundary $M'$ does not have the minimal codimension in general. Loosely speaking, one glues together too many different directions at infinity. For example, for $c\in \bb Z$ with $\gcd(3, c)=1$, all moduli spaces $M_{3m+c}$ are isomorphic to the universal plane cubic curve and $M'_{3m+c}$ is a smooth subvariety of codimension $2$  isomorphic to the universal singular locus of a cubic curve (cf.~\cite{IenaUnivCurve}).

\subsection*{The main result}
As demonstrated in~\cite{IenaCodim2},
for $P(m)=4m+c$ with $\gcd(4, c)=1$, the subvariety $M'$ is singular  of codimension $2$.
The main result of this paper is the following generalization of~\cite{IenaCodim2}.
\begin{tr}\label{tr: main}
For an integer $d\ge 4$, let $M=M_{dm-1}(\P_2)$ be the Simpson moduli space of (semi-)stable sheaves on $\P_2$ with Hilbert polynomial $dm-1$. Let $M'\subset M$ be the subvariety of singular sheaves. Then $M'$ is singular of codimension $2$.
\end{tr}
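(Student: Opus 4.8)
The space $M$ is smooth of dimension $d^2+1$ (here $\gcd(d,-1)=1$), so the assertion splits into two independent claims: that $\dim M'=d^2-1$, and that $M'$ fails to be smooth at at least one point. I would treat these separately, using throughout that a stable sheaf is pure of dimension $1$, hence locally free at every point where its Fitting support is a smooth reduced curve. Consequently a sheaf in $M'$ is forced to be non-locally-free at a singular point $p$ of its support curve $C$ of degree $d$, and normalizing that singularity ``un-glues'' $\cal F$ there.

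For the dimension I would first pin down the generic member of $M'$. The mildest singularity is a node, and over a node the only non-free torsion-free rank-$1$ module is the push-forward from the branch-separating normalization; thus a generic singular sheaf is $\cal F=\nu_*L$, where $\nu\colon C'\to C$ normalizes a single node $p\in C$ and $L\in\Pic(C')$. I would parametrize this locus by triples $(C,p,L)$ and count: pairs $(C,p)$ with $C$ nodal at $p$ form a family of dimension $\binom{d+2}{2}-2$ (vanishing of the three partials of $C$ at $p$ cuts $3$ conditions in the $(\binom{d+2}{2}-1)$-dimensional linear system, and letting $p$ vary restores $2$), while $\Pic(C')$ has dimension $g-1=\binom{d-1}{2}-1$ because resolving one node drops the arithmetic genus by $1$. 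Adding gives $\binom{d+2}{2}+\binom{d-1}{2}-3=d^2-1$. To conclude $\dim M'=d^2-1$ I would then check that every deeper degeneration — worse unibranch or multibranch singularities of $C$ (cusps, tacnodes, $\dots$), reducible or non-reduced support, or sheaves non-locally-free at more than one point — sweeps out a stratum of strictly smaller dimension, so that the closure of $\{\nu_*L\}$ exhausts $M'$ and is an irreducible component of the stated dimension.

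For singularity I would organize $M'$ through the incidence variety $\widetilde{M'}=\{(\cal F,p)\mid \cal F\text{ not locally free at }p\}$ with its finite projection $\pi\colon\widetilde{M'}\to M'$. I expect $\widetilde{M'}$ to be smooth of dimension $d^2-1$ and $\pi$ to be birational, being an isomorphism over sheaves singular at a single node but strictly $2:1$ over the locus $Z\subset M'$ of sheaves that are non-locally-free at two distinct nodes $p_1,p_2$ of $C$. Over such an $\cal F$ the two preimages $(\cal F,p_1),(\cal F,p_2)$ carry the two smooth local branches ``singular near $p_1$'' and ``singular near $p_2$'', which meet along $Z$; a transverse crossing of two smooth $(d^2-1)$-folds is singular, so $T_{[\cal F]}M'$ is strictly larger than $d^2-1$ and $M'$ is singular along $Z$. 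Finally $Z$ is nonempty precisely when a degree-$d$ curve can carry a rank-$1$ sheaf un-glued at two nodes, i.e. when $g-2=\binom{d-1}{2}-2\ge 0$, which holds exactly for $d\ge 4$; for $d=3$ one has $g=1$ and a second node is impossible on an irreducible cubic, recovering the smoothness of $M'_{3m+c}$ from \cite{IenaUnivCurve}.

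The main obstacle is the deformation-theoretic input underlying the third step: proving that $\widetilde{M'}$ is smooth of the expected dimension, that the two branches at a point of $Z$ are distinct with $\pi$ an immersion on each, and hence that the Zariski tangent space genuinely jumps. I would handle this, as in \cite{IenaCodim2}, by replacing the sheaves near $M'$ with their explicit minimal matrix resolutions, translating ``non-locally-free at $p$'' into the vanishing of a minor of the presenting matrix at $p$, and computing the relevant $\Ext^1$-spaces and tangent cones directly from that determinantal description; the same description also supplies the scheme structure of $M'$ needed to make the codimension statement precise.
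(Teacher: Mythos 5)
Your outline is a genuinely different route from the paper's (a compactified-Jacobian-style stratification of $M'$ by the singularity type of the support, versus the paper's fibration of $M_0$ as an open part of the $\P_{3d-1}$-bundle $\bb B$ over the Kronecker space $N$), and its numerology is sound: the nodal-pushforward stratum does have dimension $\binom{d+2}{2}+\binom{d-1}{2}-3=d^2-1$, and your $d=3$ consistency check is correct. But as written the two decisive steps are declarations of intent, and each is a genuine gap. For the upper bound $\dim M'\le d^2-1$ you must bound \emph{every} deeper stratum, and that is where essentially all the difficulty lives: beyond nodes the local picture is no longer ``free or $\nu_*$'' (already at an $A_k$-point there are several isomorphism classes of torsion-free rank-$1$ modules, and beyond the simple singularities finite Cohen--Macaulay representation type fails, so positive-dimensional families of local classes occur); sheaves on reducible and non-reduced support curves are not pushforwards from any normalization and need a separate semistability analysis; and since no irreducibility of $M'$ is available a priori, computing the dimension of one nice stratum says nothing about the rest. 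The paper never performs this classification: over $N_0$ every fibre of $M'$ sits inside a fixed $\P_{3d-1}$, over the two strata $N_c$ and $N_1$ (whose complement in $N_0$ has codimension $2$) the fibres are shown by explicit interpolation (the forms $x_0^2x_1q$, $x_0^2x_2q$, etc., together with Lemma~\ref{lemma:ideal point} and Lemma~\ref{lemma:ideal fat curv point} translating non-freeness into two linear conditions on the curve) to be finite unions of codimension-$2$ linear subspaces, and the complement of $\bb B_0$ is disposed of by the codimension bound of \cite{Yuan}; the bundle structure makes all degenerate strata harmless automatically.

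The singularity claim has the same character: everything rests on $\widetilde{M'}$ being smooth with two distinct branches along the two-node locus, which you yourself flag as ``the main obstacle'' --- so this is the statement still to be proved rather than a proof. (If you do establish it, note that transversality and the tangent-space computation are superfluous: a variety smooth at a point is analytically irreducible there, and a finite birational map onto a normal variety is an isomorphism, so two preimages of $\pi$ already force a singular point.) The paper gets singularity much more cheaply: over a point of $N_c$ the fibre of $M_0'$ is visibly a union of $l$ distinct codimension-$2$ linear subspaces meeting pairwise in codimension $4$, hence singular, and then generic smoothness in characteristic zero (a surjective morphism of smooth varieties has smooth generic fibre) forces $M_0'$ itself to be singular. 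Filling your gap by explicit matrix resolutions as in \cite{IenaCodim2} would in effect reconstruct the paper's local lemmas and fibrewise description, while the exhaustive stratification bound for the dimension would remain substantial extra work that the paper's argument never has to do.
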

%In Proposition~\ref{pr: generic smooth} we describe the generic smooth points of $M'$.

Using our understanding of $M'$ and the construction from~\cite{IenaUnivCurve} we obtain as a consequence Theorem~\ref{tr: blow-up modification}, which allows to interpret the blow up of $M$ along $M'$ as a modification of the boundary $M'$ by a divisor consisting generically of vector bundles.

\subsection*{Structure of the paper}
In Section~\ref{section: basic constructions}
we recall the results from~\cite{MaicanTwoSemiSt} and identify the open subvariety $M_0$ in $M$ of sheaves without global sections with an open subvariety of  a projective bundle $\bb B$ over a variety of Kronecker modules $N$.
In Section~\ref{section: Singular sheaves},
using a convenient characterization of free ideals of fat curvilinear points on planar curves from Appendix~\ref{section: commutative algebra},
we show that a generic fibre of $M_0'=M_0\cap M'$ over $N$ is 
a union of projective subspaces of codimension $2$ in the fibre of $\bb B$, thus it is
singular of codimension two,
which allows to prove the main result.
In Section~\ref{section: modifications} we briefly discuss how our analysis can be used for modifying the boundary $M'$ by vector bundles (on support).

\section{Basic constructions}\label{section: basic constructions}
\subsection{Kronecker modules}\label{subsection: Kronecker modules}
Let $\bb V$ be the affine space of Kronecker modules
\begin{equation}\label{eq:Kronecker}
(n-1)\O_{\P_2}(-1)\xra{\Phi} n\O_{\P_2}.
\end{equation}
There is a natural group action of $G=(\GL_{n-1}(\k)\times\GL_n(\k))/\k^*$ on $\bb V$.
Since $\gcd(n-1, n)=1$, all semistable points of this action are stable and $G$ acts freely on the open on the open subset of stable points $\bb V^s$.
 Then $\Phi\in \bb V^s$ if and only if $\Phi$ does not lie in the same orbit with a Kronecker module with a zero block of size $j\times(n-j)$, $1\le j\le n-1$ (cf.~\cite[Proposition~15]{Drezet},~\cite[6.2]{EllStro}). In particular, Kronecker modules with linear independent maximal minors are stable.
There exists a geometric quotient $N=N(3; n-1, n)=\bb V^s/G$, which is a smooth projective variety of dimension $(n-1)n$. For more details consult~\cite[Section 6]{EllStro} and~\cite[Section~III]{Drezet}.

The cokernel of a stable Kronecker module $\Phi\in \bb V^s$ is an ideal of a zero-dimensional scheme $Z$ of length $(n-1)n/2$ if the maximal minors of $\Phi$ are coprime. In this case the maximal minors $d_0,\dots, d_{n-1}$ are linear independent
%\mymargin{follows from the Hilbert-Burch theorem}
and there is a resolution
\begin{equation}\label{eq:sequence for points ideal}
0\ra(n-1)\O_{\P_2}(-n)\xra{\Phi} n\O_{\P_2}(-n+1)\xra{\pmat{d_0\\ \vdots\\d_{n-1}}} \O_{\P_2}\ra \O_{Z}\ra 0.
\end{equation}
Moreover $Z$ does not lie on a curve of degree $n-2$.
Let $\bb V_0$ denote the open subvariety of $\Phi\in \bb V^s$ of Kronecker modules with coprime maximal minors. Let $N_0\subset N$ be the corresponding open subvariety in the quotient space.

This way one obtains a morphism from $N_0\subset N$  to the Hilbert scheme of zero-dimensional subschemes of $\P_2$ of length $l=(n-1)n/2$, which sends a class of $\Phi\in \bb V^s$ to the zero scheme of its maximal minors. Since every zero dimensional scheme of length $l$ that does not lie on a curve of degree $n-2$ has a minimal resolution of type~\refeq{eq:sequence for points ideal}, this gives an isomorphism of $N_0$ and the open subvariety $H_0\subset H=\P_2^{[l]}$ consisting of $Z$ that do not lie on a curve of degree $n-2$. The complement of $H_0$ is an irreducible hypersurface (as mentioned e.~g. in~\cite[p.~46]{DrezetAltern}).
%\mymargin{Give a reasonable explanation}

\subsection{Projective bundle over $N$}\label{subsection: bundle over N}
Let $\bb U_1=(n-1)\Gamma(\P_2, \O_{\P_2}(1))$, $\bb U_2=n \Gamma(\P_2, \O_{\P_2}(2))$.
Consider the trivial vector bundles $\bb V\times \bb U_1$ and $\bb V\times \bb U_2$ over $\bb V$.
Consider the following morphism between them:
\[
\bb V\times \bb U_1\xra{F} \bb V\times \bb U_2, \quad (\Phi, L)\mto (\Phi, L\cdot \Phi).
\]
\begin{lemma}
The morphism $F$ is injective over $\bb V^s$.
\end{lemma}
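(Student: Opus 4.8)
The plan is to unwind the statement fibrewise. Since $F$ preserves the first coordinate, it is injective over $\bb V^s$ if and only if, for every $\Phi\in\bb V^s$, the linear map $\bb U_1\to\bb U_2$, $L\mapsto L\cdot\Phi$, has trivial kernel; a kernel element is precisely a nonzero linear syzygy of $\Phi$. Fixing coordinates $x_0,x_1,x_2$ on $V$, I would write $\Phi=x_0A_0+x_1A_1+x_2A_2$ with constant $n\times(n-1)$ matrices $A_a$, and $L=x_0v_0+x_1v_1+x_2v_2$ with $v_a\in\k^{n-1}$. Expanding the product gives $L\cdot\Phi=\sum_{a,b}x_ax_b\,A_av_b$, and comparing coefficients of the monomials $x_ax_b$ shows that $L\cdot\Phi=0$ is equivalent to the relations $A_av_a=0$ and $A_av_b+A_bv_a=0$ for $a\neq b$.

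Arguing by contradiction, suppose some $\Phi\in\bb V^s$ admits such a syzygy with $L\neq0$. I set $W_1'=\langle v_0,v_1,v_2\rangle\subseteq\k^{n-1}$ and $W_2'=\sum_aA_aW_1'\subseteq\k^n$, so that $(W_1',W_2')$ is a sub-Kronecker-module: $\Phi(W_1'\ten\O(-1))\subseteq W_2'\ten\O$. The antisymmetry relations force $W_2'=\langle A_0v_1,A_0v_2,A_1v_2\rangle$, and the key point is the dimension bound $\dim W_2'\le\dim W_1'$: when $\dim W_1'=3$ this is immediate, while if the $v_a$ satisfy a linear relation one substitutes it and uses $A_av_a=0$ together with $A_av_b=-A_bv_a$ to collapse the three generators onto a space of dimension at most $\dim W_1'-1$. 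Completing bases of $\k^{n-1}$ and $\k^n$ so that $W_1'$ and some $t$-dimensional space containing $W_2'$ (where $t:=\dim W_1'$) become coordinate subspaces, $\Phi$ acquires a zero block of size $(n-t)\times t$. Since $1\le t\le\min(3,n-1)$, this is a forbidden block of size $j\times(n-j)$ with $j=n-t\in\{1,\dots,n-1\}$, so $\Phi$ lies in the orbit of an unstable module, contradicting $\Phi\in\bb V^s$.

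I expect the dimension estimate $\dim W_2'\le\dim W_1'$ to be the only real obstacle: it is the step that converts a purely algebraic syzygy into a genuine destabilizing block of the precise size demanded by the stability criterion recalled above. Once it is in place the conclusion is immediate: no $\Phi\in\bb V^s$ has a nonzero linear syzygy, hence $L\mapsto L\cdot\Phi$ is injective for every such $\Phi$ and $F$ is injective over $\bb V^s$. As an independent check one may note that a linear syzygy is the same as a nonzero global section of the kernel of the twisted morphism $\Phi(2)\colon(n-1)\O(1)\to n\O(2)$; since $\Gamma$ is left exact, injectivity of $F$ along the fibre over $\Phi$ would also follow from injectivity of $\Phi$ as a morphism of sheaves, which in turn holds as soon as the maximal minors of $\Phi$ do not all vanish identically.
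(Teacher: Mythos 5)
Your proof is correct, but it runs along a genuinely different track than the paper's. The paper first normalizes $L$ by the $\GL_{n-1}(\k)$-action to the form $(l_1,l_2,l_3,0,\dots,0)$ with linearly independent nonzero entries, and then uses the explicit (Koszul) generators of the syzygy module of one, two, or three independent linear forms to show that the corresponding rows of $\Phi$ can be column-reduced to exhibit a zero block of size $1\times(n-1)$, $2\times(n-1)$, or $3\times(n-3)$, contradicting stability. You instead expand $\Phi=\sum_a x_aA_a$, $L=\sum_a x_av_a$, extract the relations $A_av_a=0$ and $A_av_b=-A_bv_a$, and build the destabilizing subspace pair $W_1'=\langle v_0,v_1,v_2\rangle$, $W_2'=\sum_a A_aW_1'$ with $\dim W_2'\le\dim W_1'=t$; this is exactly the intrinsic slope/subspace characterization of instability for Kronecker modules, converted into the zero-block form of the criterion at the end. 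The two arguments are dual in a precise sense: the rank $t$ of the coefficient matrix of $L$ equals both your $\dim\langle v_0,v_1,v_2\rangle$ and the paper's number of independent entries $l_i$, and your collapsed spaces ($\dim W_2'\le t-1$ for $t\le 2$, which I checked holds in all subcases of your sketched substitution, including a vanishing or proportional $v_a$) correspond exactly to the paper's syzygy computations (e.g., the one-dimensional degree-one syzygy space of two independent forms yields the paper's $2\times(n-1)$ block). What the paper's route buys is complete explicitness with no dimension lemma to verify; what yours buys is independence from syzygy-module theory and a statement of the destabilization in the form closest to Dr\'ezet's criterion, at the cost of the small case analysis behind $\dim W_2'\le\dim W_1'$, which you rightly flag as the crux. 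Two harmless caveats: your block size $(n-t)\times t$ versus the paper's $j\times(n-j)$ is just a transposition of conventions ($\Phi$ as $n\times(n-1)$ rather than $(n-1)\times n$), and the family of forbidden blocks is stable under transposition via $j\mapsto n-j$, so the identification $j=n-t$ is fine; and your closing ``independent check'' is only partial, since it presupposes that the maximal minors of a stable $\Phi$ do not all vanish identically, which you do not (and need not) prove --- it plays no role in the main argument.
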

\begin{proof}
Let  $\Phi\in \bb V^s$ and assume $L\cdot \Phi=0$ for some non-zero $0\neq L=(l_1,\dots, l_{n-1})\in \bb U_1$.

Since $l_i\in \Gamma(\P_2, \O_{\P_2}(1))$,
the dimension of the vector space generated by $\{l_i\}_i$ is at most $3$.
So for some $B\in \GL_{n-1}(\k)$ we get $L\cdot B=L'=(l_1', l_2', l_3', 0,\dots, 0)$ such that the first non-zero entries are linear independent. Then, since $\Phi$ is semistable if and only if $B^{-1}\Phi$ is semistable  and since $L\Phi=(LB)\cdot (B^{-1}\Phi)=0$, we may assume without loss of generality that $L=(l_1, l_2, l_3, 0,\dots, 0)$ and  the first non-zero entries of $L$ are linear independent.

If $l_1\neq 0$, $l_2=l_3=0$, then $L\Phi=0$ implies that the first row of $\Phi$ is zero, which contradicts the stability of $\Phi$.

If $l_1\neq 0$, $l_2\neq 0$, $l_3=0$, then the syzygy module of $(l_1, l_2)$ is generated by $\smat{l_2\\-l_1}$.
So the columns of the first two rows of $\Phi$ are scalar multiples of $\smat{l_2\\-l_1}$ and hence, after performing elementary transformations on the columns of $\Phi$, $\Phi$ is equivalent to a matrix with a zero block of size $2\times(n-1)$, which  contradicts the stability of $\Phi$.

If $l_i\neq 0$, $i=1,2,3$, then the syzygy module of $(l_1, l_2, l_3)$ is generated by $3$ linear independent generators
\[
\pmat{0\\l_3\\-l_2}, \pmat{-l_3\\0\\l_1}, \pmat{l_2\\-l_1\\0},
\]
which implies that $\Phi$ is equivalent to a matrix with a zero block of size $3\times (n-3)$, which contradicts the stability of $\Phi$.
\end{proof}
Therefore, $\bb V^s\times \bb U_1\xra{F}\bb V^s\times \bb U_2$ is a vector subbundle and  hence the cokernel of $F$ is a vector bundle of rank $6\cdot n-3\cdot(n-1)=3n+3=3d$ denoted by $E$.

The group action of $\GL_{n-1}(\k)\times\GL_n(\k)$ on $\bb V^s\times \bb U_1$ and $\bb V^s\times \bb U_2$ induces a group action of $\GL_{n-1}(\k)\times\GL_n(\k)$ on $E$ and hence an action of  $G=(\GL_{n-1}(\k)\times\GL_n(\k))/\k^*$ on $\P E$.
Finally, since the stabilizer of $\Phi\in \bb V^s$ under the action of $G$ is trivial, it acts trivially on the fibres of $\P E$ and thus $\P E$ descends to a projective $\P_{3n+2}$-bundle
\[
\bb B\xra{\nu} N=N(3; n-1, n)=\bb V^s/G.
\]

Let $\bb W$ be the affine variety of morphisms
\begin{equation}\label{eq: res0}
\O_{\P_2}(-3)\oplus (n-1)\O_{\P_2}(-2) \xra{A} n \O_{\P_2}(-1).
\end{equation}

Notice that $\bb W$ can be identified with
$\bb V\times \bb U_2$ by the isomorphism
\[
\bb V\times \bb U_2\ra \bb W,\quad (\Phi, Q)\mto \pmat{Q\\\Phi}.
\]
The group
\[
G'=(\Aut( \O_{\P_2}(-3)\oplus (n-1)\O_{\P_2}(-2) )\times\Aut( n \O_{\P_2}(-1) )/\k^*.
\]
acts on $\bb W\iso \bb V\times \bb U_2$. As shown in~\cite[Proposition~7.7]{MaicanTwoSemiSt}, $\bb B$ is a geometric  quotient of $\bb V^s\times \bb U_2\setminus \Im F$ with respect to $G'$.

\subsection{Moduli space $M_{dm-1}(\P_2)$}\label{subsection: moduli space}
Let $d\ge 4$, $d=n+1$, be an integer. Let $M=M_{dm-1}(\P_2)$ be the Simpson moduli space of (semi-)stable sheaves on $\P_2$ with Hilbert polynomial $dm-1$. In~\cite{MaicanTwoSemiSt} is was shown that $M$ contains an open dense subvariety $M_0$
 of isomorphism classes of sheaves $\cal F$ with $h^0(\cal F)=0$.

\subsubsection{Sheaves in $M_0$}
By~\cite[Claim~4.2]{MaicanTwoSemiSt} the sheaves in $M$ without global sections are exactly the cokernels of the injective morphisms~\refeq{eq: res0} with $A=\smat{Q\\\Phi}$, $\det A\neq 0$,  $\Phi\in \bb V^s$. This allows to describe $M_0$ as an open subvariety in $\bb B$.

Let $\bb W_0$ be the open subvariety of injective morphisms in $\bb W$ parameterizing the points from $M_0$.
Since the determinant of a matrix from the image of $F$ is zero, one sees that
\[
\bb W_0\subset \bb V^s\times \bb U_2\setminus \Im F,
\]
which allows to conclude that $M_0=\bb W_0/G'$ is an open subvariety of $\bb B$.

Let $\bb B_0=\bb B|_{N_0}$ be the restriction of $\bb B$ to the open subscheme $N_0\subset N$.

From the exact sequence~\refeq{eq:sequence for points ideal} it follows that a matrix $A=\smat{Q\\\Phi}\in \bb W$, $\Phi\in \bb V_0$, has zero determinant if and only if $A$ lies in the image of $F$. Therefore, the fibres of $\bb B$ over $N_0$ are contained in  $M_0$ and thus $\bb B_0\subset M_0$. As shown in~\cite{Yuan} the codimension of the complement of $\bb B_0$ in $M$ is at least $2$.

\subsubsection{Sheaves in $\bb B_0$}\label{subsubsection: over B_0}
Sheaves in $\bb B_0$ are exactly the twisted ideal sheaves $\cal I_{Z\subset C}(d-3)$ of zero dimensional schemes $Z$ of length $l$ lying on a curve $C$ of degree $d$ such that $Z$ is not contained in a curve of degree $d-3$.
%\mymargin{Is an explanation needed?}
In other words the sheaves $\cal F$ in $\bb B_0$ are given by the short exact sequences
\[
0\ra \cal F\ra \O_C(d-3)\ra \O_Z\ra 0
\]
with $Z\subset C$ as described.

A fibre over a point $[\Phi]$ from  $N_0$ can be seen as the space of plane curves of degree $d$ through the corresponding subscheme of $l$ points. The identification is given by the map
\[
\nu^{-1}([\Phi]) \ni [A]\mto \point{\det A}\in \P(S^dV^*).
\]
Indeed, if two matrices over $[\Phi]$  are equivalent, then their determinants are equal up to a non-zero constant multiple and hence the map above is well-defined. On the other hand, if two matrices $A=\smat{Q\\ \Phi}$ and $A'=\smat{Q'\\\Phi}$ have equal determinants, then $Q-Q'$ lies in the syzygy module of the maximal minors of $\Phi$ and hence (cf.~\refeq{eq:sequence for points ideal}) is a linear combination of the rows of $\Phi$, which means that $A$ and $A'$ are equivalent.

\section{Singular sheaves}\label{section: Singular sheaves}
Let $M'$ be the subvariety of singular sheaves in $M$ and  let $M_0'=M'\cap M_0$. Let us consider the restriction of $\nu$ to $M_0'$ and describe some of its fibres.

\subsection{Generic fibres}\label{subsection: generic fibres}
Let $N_c$ be the open subset of $N_0$ that corresponds to $l$ different points.
Under the isomorphism $N_0\iso H_0$ it corresponds to the open  subvariety $H_c\subset H_0$ of the  configurations of $l$ points on $\P_2$ that do not lie on a curve of degree $d-3$.

Let $[\Phi]\in N_c$ and let $Z=\{\pt_1,\dots, \pt_l\}$ be the corresponding zero-dimensional scheme.

Let $m_1, \dots, m_l$ be the monomials of degree $d-3$ in variables $x_0, x_1, x_2$ ordered, say, in lexicographical order:
\[
m_1=x_0^{d-3}, \quad m_2=x_0^{d-4}x_1, \quad m_3=x_0^{d-4}x_2,\quad \dots
\]
Since $Z$ does not lie on a curve of degree $d-3$, the matrix
\[
\pmat{
m_1(\pt_1)&\dots&m_l(\pt_1)\\
\vdots&\ddots&\vdots\\
m_1(\pt_l)&\dots&m_l(\pt_l)
}
\]
has full rank.
Assume without loss of generality that $\pt_1=\point{1,0, 0}$.
Then the matrix is
\[
\pmat{
1&0&\dots&0\\
m_1(\pt_2)&m_2(\pt_2)&\dots&m_l(\pt_2)&\\
\vdots&\vdots&\ddots&\vdots\\
m_1(\pt_l)&m_2(\pt_l)&\dots&m_l(\pt_l)&
}
\]
and therefore the matrix
\[
\pmat{
m_1(\pt_2)&m_2(\pt_2)&\dots&m_l(\pt_2)&\\
\vdots&\vdots&\ddots&\vdots\\
m_1(\pt_l)&m_2(\pt_l)&\dots&m_l(\pt_l)&
}
\]
has full rank and thus there exists a homogeneous polynomial $q$ of degree $d-3$ vanishing at the points $\pt_2,\dots, \pt_l$ with the coefficient $1$ in front of the monomial $m_1=x_0^{d-3}$. Therefore, the forms $x_0^2x_1q$ and $x_0^2x_2q$ vanish at $Z$. Notice that $x_0^2x_1q$ has the monomial $x_0^{d-1}x_1$ but does not have
$x_0^{d-1}x_2$, and $x_0^2x_2q$ has $x_0^{d-1}x_2$ but does not have
$x_0^{d-1}x_1$.

Let $\cal E$ be a sheaf over $[\Phi]$. By Lemma~\ref{lemma:ideal point} it is singular at $\pt_1$ if and only if $\pt_1$ is a singular point of the support $C$ of $\cal E$. The latter holds if and only if
in the homogeneous polynomial of degree $d$ defining $C$
the coefficients
in front of the monomials $x_0^{d-1}x_1$ and $x_0^{d-1}x_2$ vanish.
Therefore, taking into account the considerations above,
sheaves over $[\Phi]$ singular at $\pt_1$ constitute a projective subspace of codimension $2$ in the fibre $F\defeq\nu^{-1}([\Phi])\iso \P_{3d-1}$.
Since our argument can be repeated for each point $\pt_i$, $i=1,\dots, l$, we conclude that the  sheaves over $[\Phi]$ singular at $\pt_i$ constitute a projective subspace $F_i$ of codimension $2$ in the fibre $\nu^{-1}([\Phi])\iso \P_{3d-1}$.

Now let us clarify how the linear subspaces in the fibre corresponding to different points $\pt_i$ intersect with each other.
First of all notice that $Z$ must contain a triple of non-collinear points because otherwise $Z$ must lie on a line.
Therefore, in addition to the assumption $\pt_1=\point{1,0,0}$, we can assume without loss of generality that $\pt_2=\point{0,1,0}$, $\pt_3=\point{0,0,1}$.
Then the conditions for being singular at these three different points read as the absence of the following monomials in the equation of $C$:
\begin{align}\label{eq: monomials}
\begin{split}
x_0^{d-1}x_1, x_0^{d-1}x_2 \quad \text{for the point $\pt_1$};\\
x_1^{d-1}x_0, x_1^{d-1}x_2 \quad \text{for the point $\pt_2$};\\
x_2^{d-1}x_0, x_2^{d-1}x_1 \quad \text{for the point $\pt_3$}.
\end{split}
\end{align}
These conditions are clearly independent of each other. Moreover, these conditions are independent from the conditions imposed on the curves of degree $d$ by the requirement $Z\subset C$. This is the case since the forms $x_0^2x_1q$ and $x_0^2x_2q$ constructed above clearly do not have monomials
$x_1^{d-1}x_0$, $x_1^{d-1}x_2$,
$x_2^{d-1}x_0$, $x_2^{d-1}x_1$. This produces a way to construct the curves through $Z$ with only one of the monomials from~\refeq{eq: monomials}.

We conclude that for every pair of different indices
\(
\codim_F F_i\cap F_j=4.
\)
Moreover,
for every  triple  of different indices $i, j, \nu$ corresponding to $3$ non-collinear points $\pt_i$, $\pt_j$, $\pt_\nu$ from $Z$
\(
\codim_F F_i\cap F_j\cap F_\nu=6.
\)
Finally we obtain the following.
\begin{lemma}
The fibres of $M'_0$ over $N_c$ are unions of $l$ different linear subspaces of $\nu^{-1}([\Phi])\iso \P_{3d-1}$ of codimension $2$
such that each pair intersects in codimension $4$ and each triple corresponding to $3$ non-collinear points intersects in codimension $6$. In particular the fibres are singular.

\end{lemma}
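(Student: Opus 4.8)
The plan is to assemble the lemma directly from the fibre analysis already carried out in the paragraphs preceding the statement, since almost all of the geometric content has been established pointwise and the lemma is essentially the bookkeeping that packages it. First I would recall that for a fixed $[\Phi]\in N_c$ with associated reduced scheme $Z=\{\pt_1,\dots,\pt_l\}$, the fibre of $M_0'$ over $[\Phi]$ is by definition the locus of sheaves $\cal E$ over $[\Phi]$ that fail to be locally free on their support, and that by Lemma~\ref{lemma:ideal point} this is precisely the locus of sheaves singular at \emph{some} point of $Z$. Hence, as a set, the fibre is the union $\bigcup_{i=1}^l F_i$ where $F_i\subset\nu^{-1}([\Phi])$ is the locus of sheaves singular at $\pt_i$. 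The work above shows each $F_i$ is a linear subspace (cut out by the vanishing of two coefficients in the degree-$d$ equation of the support) of codimension $2$ in $\nu^{-1}([\Phi])\iso\P_{3d-1}$, so the union-of-subspaces description is immediate.

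Next I would justify that the $F_i$ are genuinely distinct and record the intersection codimensions. Distinctness follows because the two linear conditions defining $F_i$ involve monomials attached to the point $\pt_i$, and the construction of explicit curves through $Z$ carrying only one prescribed monomial (via the forms $x_0^2 x_1 q$, $x_0^2 x_2 q$ and their analogues) shows these conditions are not redundant across different indices. For the pairwise and triple intersections I would invoke exactly the independence already demonstrated: for two indices the four conditions are linearly independent, giving $\codim_F F_i\cap F_j=4$, and for a non-collinear triple the six conditions are independent, giving $\codim_F F_i\cap F_j\cap F_\nu=6$. These are the displayed equalities, so this step is a matter of citing the preceding paragraph rather than reproving it.

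Finally I would deduce singularity of the fibre. The fibre is a union of $l\ge 3$ linear subspaces each of codimension $2$, meeting pairwise in codimension $4$; thus for any two of them the intersection has codimension strictly larger than $2$, i.e.\ each $F_i\cap F_j$ is a proper subvariety of both $F_i$ and $F_j$. A reduced union of two or more distinct irreducible components of the same dimension is non-smooth along any point lying on more than one component, and since $Z$ contains a non-collinear triple there certainly exist such intersection points. Hence the fibre is reducible of pure codimension $2$ and singular along the pairwise intersections, which is the assertion.

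The main obstacle, to the extent there is one, is not a hard computation but a point of rigour: one must be sure that the fibre of $M_0'$ coincides as a \emph{scheme} (not merely as a set) with the reduced union $\bigcup_i F_i$, so that the singularity of the set-theoretic union actually certifies singularity of the fibre of $M_0'$. This requires that the characterization of singular sheaves in Lemma~\ref{lemma:ideal point} be read at the level of the defining equations on the fibre, identifying the scheme structure on each $F_i$ with the linear (hence reduced) structure coming from the vanishing of coefficients; once this identification is in place, the topological argument above transfers without modification.
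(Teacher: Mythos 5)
Your proposal is correct and takes essentially the same route as the paper: the lemma there is stated as the summary of the immediately preceding fibrewise analysis (Lemma~\ref{lemma:ideal point} reduces singularity of a sheaf to singularity of the supporting curve at a point of $Z$, the forms $x_0^2x_1q$ and $x_0^2x_2q$ and their analogues establish independence of the two linear conditions per point, and normalizing a non-collinear triple to the coordinate points gives the codimension-$4$ and codimension-$6$ intersection statements), which is exactly how you assemble it. Your closing scheme-theoretic caveat is reasonable but not an issue in the paper's framework, since $M'$ and hence its fibres are taken as subvarieties with the reduced induced structure, so the set-theoretic union of the linear subspaces $F_i$ is the fibre.
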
 
\begin{rem}\label{rem: transversal intersection 3}
One can show that \(
\codim_F F_i\cap F_j\cap F_\nu=6
\)
for every  triple  of different indices $i, j, \nu$ corresponding to $3$ different points $\pt_i$, $\pt_j$, $\pt_\nu$ from $Z$.
\end{rem}
\begin{rem}
In general it is not true that $F_i$ intersect transversally. For example, for $d=6$ and $Z=\{\pt_1,\dots, \pt_{10}\}$ with
\begin{align*}
&\pt_1=\point{1,0,0}, &&\pt_2=\point{0,1,0}, &&\pt_3=\point{0,0,1}, &&\pt_4=\point{0,1,1}, &&\pt_5=\point{0,1,-1},\\
&\pt_6=\point{1,-2, 0}, &&\pt_7=\point{1,2,-1}, &&\pt_8=\point{1,1,-2}, &&\pt_9=\point{1,-1,1}, &&\pt_{10}=\point{1,1,-1},
\end{align*}
$\codim_F F_1\cap F_2\cap F_3\cap F_4=8$ but $\codim_F F_1\cap F_2\cap F_3\cap F_4\cap F_5=9$.
\end{rem}

\subsection{Fibres over $N_1$}
Let $N_1$ be the open subset of $N_0\setminus N_c$ that corresponds to $l-2$ different simple points and one  double point.

Let $[\Phi]\in N_1$ and let $Z=\{\pt_1\} \cup \{\pt_2,\dots, \pt_{l-1}\}$ be the corresponding zero-dimensional scheme, where $\pt_1$ is a double point. Without loss of generality, applying if necessary a coordinate change, we may assume that $\pt_1$ is given by the ideal $(x_1^2, x_2)$.

\subsubsection{Sheaves singular at the double point}
Since $Z$ does not lie on a curve of degree $d-3$, the matrix
\[
\pmat{
1&0&0&\dots&0\\
0&1&0&\dots&0\\
m_1(\pt_2)&m_2(\pt_2)&m_3(\pt_2)&\dots&m_l(\pt_2)\\
\vdots&\vdots&\vdots&\ddots&\vdots\\
m_1(\pt_{l-1})&m_2(\pt_{l-1})&m_3(\pt_{l-1})&\dots&m_l(\pt_{l-1})
}
\]
has  full rank. Therefore, there exist homogeneous polynomials $q$ and $q'$ of degree $d-3$ vanishing at the points $\pt_2, \dots, \pt_{l-1}$ such that
$q$ does not have  monomial $x_0^{d-4}x_1$ but has term $x_0^{d-3}$ and
$q'$ does not have  monomial $x_0^{d-3}$ but has term $x_0^{d-4}x_1$.
This implies that the forms $x_0^2x_2q$ and $x_0^2x_1q'$ vanish at $Z$.

Let $\cal E$ be a sheaf over $[\Phi]$. Let $C$ be its support. By Lemma~\ref{lemma:ideal fat curv point} one concludes that $\cal E$ is singular at $\pt_1$ if and only if
in the homogeneous polynomial of degree $d$ defining $C$
the coefficients
in front of the monomials $x_0^{d-2}x_1^2$ and $x_0^{d-1}x_2$ vanish.
Therefore, taking into account the considerations above, sheaves over $[\Phi]$ singular at $\pt_1$ constitute a projective subspace of codimension $2$ in the fibre $\nu^{-1}([\Phi])\iso \P_{3d-1}$.

\subsubsection{Sheaves singular at simple points}
Assume in addition that $\pt_2=\point{0,0,1}$. We can still do this without loss of generality because $Z$ can not lie on a line.
Then the matrix above is
\[
\pmat{
1&0&0&\dots&0\\
0&1&0&\dots&0\\
0&0&0&\dots&1\\
m_1(\pt_{3})&m_2(\pt_{3})&m_3(\pt_{3})&\dots&m_l(\pt_{3})\\
\vdots&\vdots&\vdots&\ddots&\vdots\\
m_1(\pt_{l-1})&m_2(\pt_{l-1})&m_3(\pt_{l-1})&\dots&m_l(\pt_{l-1})
}
\]
Then, as in~\ref{subsection: generic fibres}, we obtain a homogeneous polynomial $q''$ vanishing at $Z\setminus \{\pt_2\}$ with coefficient $1$ in front of the monomial $m_l=x_2^{d-3}$. Then the polynomials $x_1x_2^2q''$ and $x_0x_2^2q''$ vanish at $Z$, the former one has $x_1x_2^{d-1}$ but does not have $x_0x_2^{d-1}$, the latter one has $x_0x_2^{d-1}$ but does not have $x_1x_2^{d-1}$. This means that the sheaves over $[\Phi]$ singular at $\pt_2$ constitute a projective subspace
of codimension $2$ in the fibre $\nu^{-1}([\Phi])\iso \P_{3d-1}$.  
This also shows that the sheaves over $[\Phi]$ singular at $\pt_j$ such that $\pt_1$ and $\pt_j$ do not lie on a line constitute a projective subspace
of codimension $2$ in the fibre $\nu^{-1}([\Phi])\iso \P_{3d-1}$.

Suppose there exists a  point $\pt_j$ such that $\pt_1$ and $\pt_j$ lie on a line.
Without loss of generality we can assume that in this case $\pt_j=\point{0,1,0}$.
Then we can construct a homogeneous polynomial $q'''$ of degree $d-3$ through $Z\setminus\{\pt_j\}$ with coefficient $1$ in front of the monomial $x_1^{d-3}$.
Then the polynomials $x_0x_1^2q'''$ and $x_1^2x_2q'''$ vanish at $Z$,
the former one has $x_0x_1^{d-1}$ but does not have $x_1^{d-1}x_2$, the latter one has  $x_1^{d-1}x_2$ but does not have $x_0x_1^{d-1}$. This means that
the sheaves over $[\Phi]$ singular at $\pt_j$ such that $\pt_1$ and $\pt_j$ lie on a line constitute a projective subspace
of codimension $2$ in the fibre $\nu^{-1}([\Phi])\iso \P_{3d-1}$. This concludes the proof of the following. 

\begin{lemma}
The fibres of $M'_0$ over $N_1$ are unions of $l-1$ different linear subspaces of $\nu^{-1}([\Phi])\iso \P_{3d-1}$ of codimension $2$.
In particular the fibres are singular.
\end{lemma}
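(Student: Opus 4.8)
The plan is to exploit the identification, established in~\ref{subsubsection: over B_0}, of the fibre $\nu^{-1}([\Phi])\iso \P_{3d-1}$ with the linear system of plane curves of degree $d$ passing through $Z$, under which a sheaf $\cal E$ over $[\Phi]$ corresponds to its support curve $C$ via $\cal E=\cal I_{Z\subset C}(d-3)$. Since $\cal I_{Z\subset C}$ is locally free on $C$ away from $Z$, the sheaf $\cal E$ can fail to be locally free only at the points of $\Supp Z$, which for $[\Phi]\in N_1$ are the $l-1$ distinct points $\pt_1,\dots,\pt_{l-1}$, where $\pt_1$ supports the double point. Writing $F_i\subset\nu^{-1}([\Phi])$ for the locus of curves for which $\cal E$ is singular at $\pt_i$, the fibre of $M_0'$ over $[\Phi]$ is therefore exactly $\bigcup_{i=1}^{l-1}F_i$, and it suffices to show that each $F_i$ is a linear subspace of codimension $2$ and that these $l-1$ subspaces are pairwise distinct.

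First I would translate, point by point, the condition that $\cal E$ is singular at $\pt_i$ into the vanishing of two coefficients of the degree-$d$ equation of $C$. For the simple points this uses Lemma~\ref{lemma:ideal point}, under which singularity at $\pt_i$ is equivalent to $\pt_i$ being a singular point of $C$; for the double point $\pt_1$, normalized to the ideal $(x_1^2,x_2)$, this uses Lemma~\ref{lemma:ideal fat curv point}, which singles out the two monomials $x_0^{d-2}x_1^2$ and $x_0^{d-1}x_2$. In every case the two conditions are linear in the coefficients of $C$, so $F_i$ is cut out inside the fibre by two linear equations and has codimension at most $2$.

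The heart of the argument is to show that these two linear conditions at $\pt_i$ are independent of each other and, more importantly, independent of the incidence conditions $Z\subset C$ that define the fibre itself. Here I would use that the evaluation matrix of the degree-$(d-3)$ monomials at the points of $Z$ has full rank, which holds throughout $N_0\supset N_1$ because $Z$ lies on no curve of degree $d-3$. This full-rank property lets me produce a degree-$(d-3)$ form $q$ vanishing on all of $Z$ except one prescribed point and carrying a prescribed leading monomial; multiplying such forms by suitable cubic monomials yields degree-$d$ forms through $Z$ realizing exactly one of the two critical monomials at a time. I expect this independence step to be the main obstacle, and the double point to be the delicate case: one must match the leading monomials of the auxiliary forms (here $x_0^{d-3}$ and $x_0^{d-4}x_1$) to the non-obvious monomials produced by Lemma~\ref{lemma:ideal fat curv point}, so that $x_0^2x_2q$ and $x_0^2x_1q'$ separate the two conditions. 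A minor additional case distinction is needed for the simple points, according to whether $\pt_j$ is collinear with $\pt_1$, since this changes which cubic multipliers are available.

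Finally, having established that each $F_i$ is a genuine codimension-$2$ linear subspace and that the $l-1$ of them are distinct, I would conclude as follows. For $d\ge4$ one has $l-1=(d-1)(d-2)/2-1\ge 2$, so the fibre is a union of at least two distinct linear subspaces; any two of them meet in $\P_{3d-1}$ since $(3d-3)+(3d-3)\ge 3d-1$, so the union is connected and reducible, hence not smooth. This yields the asserted singularity of the fibres of $M_0'$ over $N_1$.
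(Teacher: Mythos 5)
Your proposal is correct and follows essentially the same route as the paper's own proof: the identification of the fibre with the linear system of degree-$d$ curves through $Z$, the translation of singularity at each point into two linear coefficient conditions via Lemma~\ref{lemma:ideal point} and Lemma~\ref{lemma:ideal fat curv point} (with exactly the monomials $x_0^{d-2}x_1^2$, $x_0^{d-1}x_2$ at the double point), the full-rank interpolation construction of the auxiliary degree-$(d-3)$ forms, and the case distinction for simple points collinear with the double point all match the paper. Your only addition is to spell out the final step --- that a union of $l-1\ge 2$ distinct, pairwise intersecting codimension-$2$ linear subspaces of $\P_{3d-1}$ is singular --- which the paper leaves implicit.
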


\subsection{Main result}
Now we are able to prove Theorem~\ref{tr: main}.

\subsubsection*{Singularities}
Notice that a generic fibre of a surjective morphism of smooth varieties must be smooth.
Therefore, since a generic fibre of $M_0'$ over $N$ is singular as demonstrated in~\ref{subsection: generic fibres}, we conclude that $M_0'$ is singular. Therefore, $M'$ is a singular subvariety in $M$.

\subsubsection*{Dimension}
Since, as shown in~\cite{Yuan}, the codimension of the complement of $\bb B_0$ in $M$ is at least $2$, in order to demonstrate that $\codim_M M' = 2$, it is enough to show that $\codim_{\bb B_0} \bb B_0\cap M'=2$.
Denote $\bb B_c=\bb B|_{N_c}$, $\bb B_1=\bb B|_{N_1}$. Since the complement of $N_c\sqcup N_1$ in $N_0$ has codimension $2$, it is enough to show
\[
\codim_{\bb B_c\sqcup \bb B_1} M'\cap(\bb B_c\sqcup \bb B_1)=2.
\]
The latter follows immediately since the codimension of fibres of $M_0'$ over $N_c\sqcup N_1$ is $2$. This concludes the proof.

\subsubsection*{Smooth locus of $M'$}
\begin{pr}\label{pr: generic smooth}
The smooth locus of $M'$ over $N_c$ coincides with the locus of sheaves
corresponding to $Z\subset C$ such that only one of the points in $Z$ is a singular point of $C$.
\end{pr}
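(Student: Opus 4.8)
The plan is to exploit the fibrewise description obtained above. Over $N_c$ the restriction of $M'$ sits inside the projective bundle $\bb B_c=\bb B|_{N_c}$ as the locus of sheaves whose support is singular at one of the points of $Z$, and this is all of $M'|_{N_c}$ because, as recalled above, such a sheaf can fail to be locally free only at the points of $Z$, and it is singular at a point $\pt_i$ precisely when its support $C$ is singular there. The $l$ points of $Z$ are permuted by monodromy over $N_c$, so I first pass to a finite \'etale cover $\tilde N_c\ra N_c$ on which they are labelled $\pt_1,\dots,\pt_l$; since smoothness is both preserved and reflected by \'etale morphisms, it suffices to analyse the pullback of $M'$ to $\tilde N_c$. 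There this pullback is the union $\bigcup_{i=1}^{l}\bb F_i$, where $\bb F_i$ is the locus of sheaves singular at $\pt_i$. By the computation of the generic fibres, the fibre of $\bb F_i$ over each $[\Phi]$ is the codimension-$2$ linear subspace $F_i\subset\nu^{-1}([\Phi])\iso\P_{3d-1}$ cut out by the vanishing of two coefficients of the equation defining $C$; as these conditions vary algebraically with the configuration, $\bb F_i$ is a projective subbundle of codimension $2$ over the smooth base $\tilde N_c$, hence smooth.

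The key step is a smoothness criterion for this union: a point $p\in\bigcup_i\bb F_i$ is smooth if and only if it lies on exactly one of the $\bb F_i$. If $p$ lies on $\bb F_i$ and on no $\bb F_j$ with $j\ne i$, then, the $\bb F_j$ being closed, some open neighbourhood $U$ of $p$ satisfies $U\cap\bigcup_k\bb F_k=U\cap\bb F_i$, which is smooth, so $p$ is a smooth point. For the converse I record that the $\bb F_i$ are pairwise distinct irreducible components of the union: they all have the same dimension $\dim\tilde N_c+3d-3$, and $\bb F_i\ne\bb F_j$ for $i\ne j$ because fibrewise $\codim_F F_i\cap F_j=4>2$; being irreducible of equal dimension, no $\bb F_i$ is contained in the union of the others. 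Hence if $p\in\bb F_i\cap\bb F_j$ with $i\ne j$, at least two distinct irreducible components pass through $p$, so the local ring of $\bigcup_k\bb F_k$ at $p$ has at least two minimal primes and is not an integral domain; a regular local ring being a domain, $p$ is a singular point.

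Putting the two directions together, the smooth points of the pullback of $M'$ over $\tilde N_c$ are exactly those lying on a single $\bb F_i$, that is, the sheaves whose support $C$ is singular at exactly one point of $Z$. Since this last property is invariant under the monodromy permuting the points, it descends to $N_c$ and yields the asserted description of the smooth locus of $M'$ over $N_c$. I expect the main obstacle to be not the local algebra, which is routine, but the global bookkeeping in the first step: one must verify that the fibrewise union of the $F_i$ genuinely assembles, after the \'etale base change, into a union of smooth codimension-$2$ subbundles that are pairwise distinct, so that each $\bb F_i$ is an honest irreducible component. This is precisely where the labelling provided by $\tilde N_c$ and the intersection codimensions established for the generic fibres enter; once it is secured, the singularity along the multiple-point loci is forced purely by the coexistence of several components and requires no further computation.
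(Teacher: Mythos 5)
Your proof is correct and takes essentially the same route as the paper: where you label the points of $Z$ by a global finite \'etale cover of $N_c$, the paper uses local (analytic or \'etale) sections of the quotient $\prod_{1}^{l}\P_2\ra S^l\P_2$, and both arguments then realize $M'$ over $N_c$ as a union of smooth codimension-$2$ projective subbundles ($\bb F_i$, resp.\ $S_j$), smooth precisely off the pairwise intersections. Your explicit minimal-primes argument for why a point lying on two or more components is singular spells out a step the paper leaves implicit, but the decomposition and the key fibrewise input (the codimension-$2$ and codimension-$4$ computations from the generic-fibre analysis) are identical.
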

\begin{proof}
Notice that $H_c\iso N_c$ can be seen as an open subscheme in $S^l\P_2$.
Taking the composition of a local section (in analytic or \'etale topology) of the quotient $\prod_{1}^{l}\P_2\ra S^{l}\P_2$ with the  projection $\prod_{1}^{l}\P_2\ra\P_2$ to the $j$-th factor, we get, locally around a given $Z_0\in H_c$,  $l$ different local choices
\[
N_c\iso H_c\supset U\xra{p_{j}} \P_2, \quad j=1,\dots, l,
\]
of one point in $Z\in U\subset H_c$. Shrinking $U$ if necessary we can assume that $\bb B\ra N_c$ is trivial over $U$. Then by~\ref{subsection: generic fibres} the subvariety  $S_{j}\subset \bb B|_U$ of those sheaves given by $Z\subset C$ that are singular at point $p_{j}(Z)\in \P_2$ is isomorphic to a product of $U$ with a linear subspace of $\P_{3d-1}$ of codimension $2$ (i.~e., with $\P_{3d-3}$). Therefore, $S_{j}$  is  smooth. Notice that $M'\cap \bb B|_U$ is isomorphic to the union of $S_{j}$, $j=1,\dots, l$. Therefore,
\(
\bigcup_{j} S_{j}\setminus \bigcup_{j\neq i} S_{j}\cap S_{i}
\)
is smooth, which proves the required statement.
\end{proof}

%\begin{rem}
%Notice that from the proof of Proposition~\ref{pr: generic smooth}
%it follows that over $N_c$ one can choose a trivialization
%of the projective bundle $\bb B\ra N$ such that the tangent vectors parallel
%to the base at the points of $M'_0$  are also tangent to $M'_0$.
%\end{rem}
\section{Modifying the boundary by vector bundles}\label{section: modifications}
\subsection{Normal spaces at $M'\cap\bb B_c$}
Let $[\cal F]\in \bb B_c\cap M'$ be the isomorphism class of a singular sheaf represented by a curve $C$ of degree $d$ and a configuration of $l$ points $Z\subset C$, $Z=\{\pt_1, \dots, \pt_l\}$. Assume without loss of generality that $Z\cap \Sing C=\{\pt_1,\dots, \pt_r\}\defeq Z'$, $0<r \le l$.

Using, for every $j=1,\dots, r$, the local choice of a point $p_j$ and the morphism $M\ra \P S^d V^*$, $[\cal G]\mto \Supp \cal G$, we obtain locally around $[\cal F]$ a morphism $\bb B_c\supset U_{[\cal F]}\xra{\rho_j} \UC{d}$ from a neigbourhood of $[\cal F]$ to the universal planar curve $\UC{d}$ of degree $d$.

This induces a linear map of the tangent spaces $\Tsp_{[\cal F]}\bb B_c \ra \Tsp_{(C,\pt_j)}\UC{d}$.
Let $\UC{d}'$ denote the universal singular locus of $\UC{d}$. Since $\rho_{j}$ maps $S_j$ to $\UC{d}'$, we obtain also the induced linear map on the normal spaces
\[
N_{[\cal F]}^{(j)}\defeq\Tsp_{[\cal F]}\bb B_c /\Tsp_{[\cal F]} S_j\xra{\Tsp_{[\cal F]}(\rho_j)} \Tsp_{(C,\pt_j)}\UC{d}/\Tsp_{(C,\pt_j)}\UC{d}'\eqdef N_{(C, \pt_j)}.
\]
\begin{lemma}\label{lemma: iso on normal}
The linear map $N_{[\cal F]}^{(j)}\xra{\Tsp_{[\cal F]}(\rho_j)} N_{(C, \pt_j)}$ constructed above is an isomorphism of $2$-dimensional vector spaces.
\end{lemma}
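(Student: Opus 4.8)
The plan is to compute both normal spaces explicitly through their defining equations and to show that $\Tsp_{[\cal F]}(\rho_j)$ carries a distinguished basis of $N^{(j)}_{[\cal F]}$ to a basis of $N_{(C,\pt_j)}$. Throughout I fix coordinates so that $\pt_j=\point{1,0,0}$, which by repeating the argument of Subsection~\ref{subsection: generic fibres} for the index $j$ entails no loss of generality; here $\pt_j\in\Sing C$, i.e. $1\le j\le r$, so that $(C,\pt_j)\in\UC{d}'$ and the map on normal spaces is defined at $[\cal F]$.

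First I would describe $\UC{d}'$ and its normal bundle inside $\UC{d}$. Writing a curve as $f=\sum_\alpha c_\alpha x^\alpha$, the universal curve is $\UC{d}=\{(f,p):f(p)=0\}\subset\P(S^dV^*)\times\P_2$ and the universal singular locus is $\UC{d}'=\{(f,p):f_0(p)=f_1(p)=f_2(p)=0\}$, with $f_i=\partial f/\partial x_i$. Since at a fixed $p$ the three functionals $f\mapsto f_i(p)$ are linearly independent, $\UC{d}'$ is the projectivisation of a linear subbundle of $\P_2\times\P(S^dV^*)$ over $\P_2$, hence smooth of codimension $2$ in $\UC{d}$; in particular $N_{(C,\pt_j)}$ is genuinely $2$-dimensional. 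At $\pt_j=\point{1,0,0}$ the Euler relation $d\,f(p)=\sum_i x_i(p)f_i(p)$ forces $f_0(\pt_j)=0$ on $\UC{d}$, so near $(C,\pt_j)$ the subvariety $\UC{d}'$ is cut out inside $\UC{d}$ by the two functions $g_1\defeq f_1(p)$ and $g_2\defeq f_2(p)$, which are precisely the coefficients of $x_0^{d-1}x_1$ and $x_0^{d-1}x_2$. Testing on the tangent vectors $(\dot f,0)$ with $\dot f=x_0^{d-1}x_1$ and $\dot f=x_0^{d-1}x_2$ shows that $\d g_1,\d g_2$ are independent on $\Tsp_{(C,\pt_j)}\UC{d}$, so they form a basis of the conormal space $(N_{(C,\pt_j)})^*$.

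Next I would exhibit two vertical (fibre) tangent vectors of $\bb B_c$ realising the dual basis. Let $q$ be the form of degree $d-3$ vanishing on $Z\setminus\{\pt_j\}$ with coefficient $1$ in front of $x_0^{d-3}$, as constructed in Subsection~\ref{subsection: generic fibres}. The forms $x_0^2x_1q$ and $x_0^2x_2q$ pass through $Z$, hence define points of the fibre $\nu^{-1}([\Phi])\iso\P_{3d-1}$ (which, by the dimension count, is the full linear system of degree-$d$ curves through $Z$), and the associated tangent directions $v_1,v_2\in\Tsp_{[\cal F]}\bb B_c$ keep $Z$, and thus $p_j$, fixed to first order. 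Since the first component of $\rho_j$ restricts on the fibre to the inclusion of linear systems, we get $\Tsp_{[\cal F]}(\rho_j)(v_k)=(x_0^2x_kq,\,0)$ with $\dot p=0$, so the $\dot p$-term of $\d g_i$ drops out and $\d g_i(\dot f,0)$ is just the coefficient of $x_0^{d-1}x_i$ in $\dot f$. By the monomial bookkeeping of Subsection~\ref{subsection: generic fibres}, $x_0^2x_1q$ contributes $x_0^{d-1}x_1$ but no $x_0^{d-1}x_2$, and symmetrically for $x_0^2x_2q$; hence $v_1,v_2$ map to $(1,0)$ and $(0,1)$ in the coordinates $(g_1,g_2)$ on $N_{(C,\pt_j)}$.

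Finally I would conclude: the images of $v_1,v_2$ span $N_{(C,\pt_j)}$, so $\Tsp_{[\cal F]}(\rho_j)$ is surjective, and since both $N^{(j)}_{[\cal F]}$ and $N_{(C,\pt_j)}$ are $2$-dimensional (the former because $S_j$ has codimension $2$ in $\bb B_c$ by Subsection~\ref{subsection: generic fibres}), a surjection between them is an isomorphism; incidentally $v_1,v_2$ then descend to a basis of $N^{(j)}_{[\cal F]}$. The step needing the most care is the computation of $\Tsp(\rho_j)$: one must verify that the chosen deformations lie in the fibre direction, so that $\dot p=0$, which is exactly what kills the $\dot p$-contribution to $\d g_i$ and isolates the two coefficients $g_1,g_2$. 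Restricting attention to vertical directions has the payoff that the horizontal part of $\rho_j$ (where $p_j$ moves and the Hessian of $f$ re-enters $\d g_i$) never has to be analysed.
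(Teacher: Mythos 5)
Your proof is correct and takes essentially the same route as the paper's: both reduce to the vertical (fibre) direction, where $\rho_j$ restricts to the inclusion of the linear system of degree-$d$ curves through $Z$ into the curves through $\pt_j$, and both invoke the forms $x_0^2x_1q$, $x_0^2x_2q$ from Subsection~\ref{subsection: generic fibres} to see that the two coefficient conditions cutting out $\UC{d}'$ (the coefficients of $x_0^{d-1}x_1$ and $x_0^{d-1}x_2$) stay independent on that fibre. The only cosmetic difference is that the paper identifies $N^{(j)}_{[\cal F]}$ and $N_{(C,\pt_j)}$ with the fibrewise normal spaces $\Tsp_C F/\Tsp_C F'_j$ and $\Tsp_C F_{\pt_j}/\Tsp_C F'_{\pt_j}$ via local triviality of $S_j$ and $\UC{d}'$, whereas you obtain the isomorphism by exhibiting two vertical tangent vectors whose images form a basis of $N_{(C,\pt_j)}$ and then counting dimensions, making explicit (via the Euler relation) the local equations $g_1,g_2$ that the paper leaves implicit.
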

\begin{proof}
Let $F=\nu^{-1}(\nu([\cal F]))$ be the fibre of $[\cal F]$.
As already noticed $F$ can be seen as the space of curves of degree $d$ through $Z$.
For a fixed $j\in\{1,\dots, r\}$
let $F_j'$ be the fibre of $S_j$ over $\nu([\cal F])$, which can be seen as the subspace of $F$ of those curves through $Z$ singular at $\pt_j$.
Let $F_{\pt_j}$ be the space of curves of degree $d$ through $\pt_j$ and let $F'_{\pt_j}$ be its subspace of curves singular at $\pt_j$.

Our analysis in~\ref{subsection: generic fibres} implies that $\rho_j$ induces an isomorphism $\Tsp_C F/\Tsp_C F'_j\iso \Tsp_C F_{\pt_j}/\Tsp_C F'_{\pt_j}$, which concludes the proof because both $S_j$ and $\UC{d}'$ are locally trivial over $N$ and $\P_2$ respectively and hence
$N_{[\cal F]}^{(j)}\iso \Tsp_C F/\Tsp_C F'_j$,
$N_{(C, \pt_j)}\iso\Tsp_C F_{\pt_j}/\Tsp_C F'_{\pt_j}$.
\end{proof}

%\begin{rem}\label{rem: common normal}
%Notice that since for a finite collection of vector subspaces of the same dimension in a given finitely dimensional vector space there always exists a common complementary basis, the spaces $N_{[\cal F]}^{(j)}$ are all isomorphic to some $2$-dimensional vector denoted by $N_{[\cal F]}$.
%\end{rem}

\subsection{$R$-bundles}

Let $\fcal U$ denote the universal family on $M\times \P_2$. 
Consider  a germ of a morphism $\gamma$ of a smooth curve $T$ to $\bb B_c$ mapping $0\in T$ to $\gamma(0)=[\cal F]$.
Let $\fcal F$ be the pullback of  $\fcal U$ along $\gamma\times \id_{\P_2}$.

If $\gamma$ is not tangent to $S_j$ at $[\cal F]$,
then $\fcal F$  represents $[\cal F]$ as a flat $1$-parameter degeneration of sheaves $[\fcal F_t]=\gamma(t)$ non-singular at point $p_{j}(\nu\circ \gamma(t))$, where $\fcal F_t=\fcal F|_{\{t\}\times\P_2}$.
If $\gamma$ is not tangent to $S_j$ at $[\cal F]$ for all $j=1,\dots, r$, then $\fcal F$ is a degeneration of non-singular sheaves to $\cal F=\fcal F_0$.

Let $\tilde{T\times \P_2}\xra{\sigma}T\times \P_2$ be the blow-up $\Bl_{\{0\}\times Z'} (T\times \P_2)$ and let $D_1=D_1(Z')$ be its exceptional divisor, which is a disjoint union of projective planes $D_1(p_j)\iso \P_2$, $j=1, \dots, r$.

The fibre of the flat morphism
\(
\tilde{T\times \P_2}\xra{\sigma}T\times \P_2\xra{pr_1} T
\)
over $0$ is a reduced surface $D(Z')=D(p_1, \dots, p_r)$ obtained by blowing-up $\P_2$ at $\{\pt_1,\dots, \pt_r\}$ and attaching the
surfaces $D_1(\pt_j)\iso \P_2$, $j=1,\dots, r$,
to $D_0(Z')=D_0(\pt_1,\dots, \pt_r)=\Bl_{Z'} \P_2$
along the exceptional lines $L_1,\dots, L_r$.

Let $\fcal E$ be the sheaf on $\tilde{T\times \P_2}$ obtained as the quotient of the pullback $\sigma^*(\fcal F)$ by the subsheaf $\cTor_{\cal I_{D_1}}(\sigma^*(\fcal F))$ generated by the sections annihilated by the ideal sheaf $\cal I_{D_1}$ of the exceptional divisor $D_1$.

\begin{lemma}
Assume that $\gamma$ is not tangent to $S_j$ at $[\cal F]$ for all $j=1,\dots, r$.
Then the sheaf $\fcal E$ is a flat family  of  $1$-dimensional sheaves. Its fibres $\fcal E_t$, $t\neq 0$, are non-singular sheaves  on $\P_2$, the fibre $\cal E=\fcal E_0$ is a $1$-dimensional non-singular sheaf on $D(Z')$.
\end{lemma}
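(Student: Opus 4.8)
The plan is to reduce the assertion to a local computation in a formal (or \'etale) neighbourhood of each of the finitely many blow-up centres $(0,\pt_j)$, $j=1,\dots,r$, and there to identify $\fcal E$ with the cokernel of a $2\times2$ matrix factorization of the equation of the total family of supports. Away from the exceptional divisor $D_1$ there is nothing to do: $\fcal F$ is $T$-flat, being the pullback of the universal family $\fcal U$, and by the degeneration described just before the statement $\fcal F_t$ is non-singular for $t\neq0$; moreover the non-locally-free locus of $\cal F=\cal I_{Z\subset C}(d-3)$ equals $Z\cap\Sing C=\{\pt_1,\dots,\pt_r\}$, since at every smooth point of $C$ (in $Z$ or not) the sheaf is a line bundle on $C$. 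As $\sigma$ is an isomorphism over $(T\times\P_2)\setminus(\{0\}\times Z')$ and the $\cal I_{D_1}$-torsion lives on $D_1$, off $D_1$ we have $\fcal E\iso\fcal F$, which is $T$-flat with the asserted fibres. Hence it suffices to treat a neighbourhood of each $D_1(\pt_j)$.

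For the local model I use that over $N_c$ the point $\pt_j$ is a reduced point of $Z$. After a relative coordinate change moving the section of marked points to $\{x=y=0\}$, I may write, in coordinates $(t;x,y)$ centred at $(0,\pt_j)$ and disregarding the locally trivial twist, $\fcal F=(x,y)\,\O_{\cal C}$, where $\cal C=V(f)$ is the total family of supports and $f=f_t(x,y)$. Writing $f=\alpha x+\beta y$, which is possible because $\pt_j\in C_t$ for every $t$, yields the presentation
\[
\O^2\xrightarrow{\ \Psi\ }\O^2\to\fcal F\to0,\qquad\Psi=\pmat{y&\alpha\\-x&\beta},\quad\det\Psi=f.
\]
By Lemma~\ref{lemma: iso on normal} together with the analysis in~\ref{subsection: generic fibres}, the hypothesis that $\gamma$ is not tangent to $S_j$ translates into transversality of the family of supports, namely that the linear part of $f_t$ at $\pt_j$ is non-zero to first order in $t$:
\(
(a_1,a_2):=\bigl(\partial_t\partial_x f,\partial_t\partial_y f\bigr)(0,\pt_j)\neq(0,0).
\)

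The heart of the proof is the pullback and torsion removal. In the chart of $\sigma$ adapted to $D_1(\pt_j)$, with $x=tu$, $y=tv$ and $D_1=\{t=0\}$, every entry of $\sigma^*\Psi$ is divisible by $t$ (this uses $\pt_j\in\Sing C$, which makes the linear part of $f$ at $t=0$ vanish), so $\sigma^*\Psi=t\,\tilde\Psi$ with $\tilde\Psi=\smat{v&\hat\alpha\\-u&\hat\beta}$ and $\det\tilde\Psi=\tilde f:=\sigma^*f/t^2$. Since $\tilde\Psi$ is injective, the kernel of the natural surjection $\Coker(t\tilde\Psi)\to\Coker\tilde\Psi$ is annihilated by $t$ (indeed $\iso(\O/t\O)^2$) and supported on $D_1$, whereas $\Coker\tilde\Psi$, as the cokernel of a matrix factorization of $\tilde f$, is a maximal Cohen--Macaulay module on the strict transform $\tilde{\cal C}=V(\tilde f)$ and in particular has no $t$-torsion. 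Therefore removing the $\cal I_{D_1}$-torsion identifies $\fcal E$ locally with $\Coker\tilde\Psi$. Flatness over $T$ is then immediate: transversality gives $\tilde f|_{t=0}=Q_2(u,v)+a_1u+a_2v\not\equiv0$, where $Q_2$ is the tangent cone of $C$ at $\pt_j$, so $t$ is a non-zerodivisor on $\O_{\tilde{\cal C}}$ and hence on the torsion-free module $\fcal E\iso\Coker\tilde\Psi$.

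It remains to read off the fibres. For $t\neq0$ the morphism $\sigma$ is an isomorphism and $\fcal E_t=\fcal F_t$ is non-singular. For $t=0$, flatness yields $\cal E=\fcal E_0=\Coker(\tilde\Psi|_{t=0})$, a pure $1$-dimensional sheaf supported on the strict transform $\tilde C\subset D_0(Z')$ together with the conic $C^{(j)}=V(Q_2+a_1u+a_2v)\subset D_1(\pt_j)$, hence on $D(Z')$. It is locally free of rank one there exactly because the entries $u,v,\hat\alpha,\hat\beta$ of $\tilde\Psi|_{t=0}$ have no common zero on $V(\tilde f|_{t=0})$: away from $(u,v)=(0,0)$ one of $u,v$ is non-zero, while at $(0,0)$ one has $(\hat\alpha,\hat\beta)=(a_1,a_2)\neq(0,0)$. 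This use of the transversality hypothesis is the main obstacle of the argument --- namely, showing that the torsion removal converts the degenerate pullback into a matrix factorization whose cokernel is a line bundle on the enlarged support. The remaining points are routine: the charts glue to a locally free sheaf along the exceptional line $L_j=D_0(Z')\cap D_1(\pt_j)$, and the Hilbert polynomial of $\fcal E_0$ equals $dm-1$ automatically, by $T$-flatness and properness of $\tilde{T\times\P_2}\to T$. Together these identify $\cal E=\fcal E_0$ as a $1$-dimensional non-singular sheaf on $D(Z')$.
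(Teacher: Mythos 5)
Your proof is correct, and its skeleton is the same as the paper's: localize at the blow-up centres $(0,\pt_j)$, present $\fcal F$ there as the cokernel of a $2\times 2$ matrix with determinant the equation of the family of supports, pull back, divide the matrix by the local equation of $D_1$, and identify the torsion-free quotient $\fcal E$ with the cokernel of the divided matrix (your argument that $K=\ker\bigl(\Coker(t\tilde\Psi)\to\Coker\tilde\Psi\bigr)\iso(\O/t\O)^2$ is exactly the $\cal I_{D_1}$-torsion is the same identification the paper obtains by its diagram chase, since the quotient has no $t$-torsion). Where you genuinely diverge is the endgame: the paper at this point invokes Lemma~5.1 of \cite{IenaUnivCurve} (and the discussion preceding it) for both flatness and local freeness of $\cal E$ on its support, whereas you reprove these facts directly --- flatness via the matrix-factorization/maximal Cohen--Macaulay argument showing $t$ is a non-zerodivisor on $\Coker\tilde\Psi$ (using $\tilde f|_{t=0}=Q_2+a_1u+a_2v\not\equiv 0$), and local freeness via the unit-entry criterion for a $2\times 2$ presentation, with the explicit identification $(\hat\alpha,\hat\beta)(0)=(a_1,a_2)=\bigl(\partial_t\partial_xf,\partial_t\partial_yf\bigr)(0,\pt_j)$ translating the non-tangency of $\gamma$ to $S_j$. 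This buys a self-contained proof that makes visible precisely where the transversality hypothesis enters (only at the centre of $D_1(\pt_j)$ in the $t$-chart), and your implicit observation that in the $x$- and $y$-charts one entry of $\tilde\Psi$ is the unit $\pm 1$, so $\fcal E\iso\O/(\tilde f)$ there, does legitimately reduce the ``routine gluing'' along $L_j$ to a triviality; the paper's route, by contrast, is shorter and works uniformly in all charts at once via the canonical section $s$ of $\O(D_1)$, at the cost of deferring the substance to the earlier paper. Minor quibbles only: your non-zerodivisor claim for $t$ on $\O_{\tilde{\cal C}}$ is verified only in the $t$-chart and should be noted to hold in the remaining charts as well (it does, since no component of $\tilde{\cal C}$ lies in the fibre over $0$), and $V(\tilde f)$ is the strict transform precisely because transversality forces the total family $\cal C$ to have multiplicity exactly $2$ at the centre --- both are immediate from what you already wrote.
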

\begin{proof}
We shall show that our definition of $\fcal E$ locally coincides with the construction from~\cite{IenaUnivCurve}.

Fix some local coordinates $x_i, y_i$ at $\pt_i$. Then in some neighbourhood $U_i$ of $\pt_i$ the sheaf $\cal F$ is just an ideal sheaf of  $\pt_i$ on $C$ and hence can be given as the cokernel of a morphism
  \[
  2\O_{U_i} \xra{A} 2\O_{U_i}, \quad A=\smat{x_i&y_i\\a_{i}&b_{i}},
  \]
  for some  polynomials $a_i, b_i$ in $x_i, y_i$.
As $\pt_i\in \Sing S$ for  $i=1,\dots, r$, $a_{i}$ and $b_{i}$ do not have constant terms.

Let $\fcal U_d$ denote the universal family on $\UC{d}\times \P_2$.
Then, for a fixed $j\in\{1,\dots, r\}$, locally around the point  $[\cal F]\times \pt_j\in \bb B_c\times \P_2$,  $\fcal U$ is isomorphic to the pullback of $\fcal U_d$ along $\rho_j\times\id_{\P_2}$.
Then $\fcal F$ is isomorphic locally around $0\times \pt_j$ to the pullback of $\fcal U_d$ along $\gamma_j\times \id_{P_2}$, where $\gamma_j=\rho_j\circ\gamma$.

The latter means that the family $\fcal F$ is given around $0\times \pt_j$ as a cokernel of the morphism
  \[
  2\O_{T\times U_j} \xra{\smat{x_j&y_j\\v_j&w_j}+t\cdot B(t)} 2\O_{T\times U_j},\quad B(t)=\smat{b_{11}(t)&b_{12}(t)\\b_{21}(t)&b_{22}(t)},
  \]
  where $b_{11}(t), b_{12}(t)$ take values in $\k$ and $b_{21}(t), b_{22}(t)$ are polynomials of degree not bigger than $d$.

 The blow-up $\tilde{T\times \P_2}$ over $0\times \pt_j \in T\times \P_2$ is locally just the blow-up $\Bl_{0\times \pt_j}T\times U_j\eqdef \tilde{T\times U_j}$, it can be seen as a subvariety in $T\times U_j\times \P_2$ given by the $(2\times 2)$-minors of the matrix $\smat{t&x_j&y_j\\u_0&u_1&u_2}$, where $u_0, u_1, u_2$ are some homogeneous coordinates of the last factor $\P_2$. The canonical section $s$ of $\O_{\tilde{T\times \P_2}}(D_1)$ is locally given by $t/u_0$, $x_j/u_1$, $y_j/u_2$.

As in~\cite{IenaUnivCurve} one  ``divides'' the pull back $\sigma^*(A+tB(t))$  by $s$ and obtains a family $\fcal E'$ of one-dimensional sheaves given by a locally free resolution
\[
0\ra 2\O_{\tilde{T\times U_j}}(D_1) \xra{\phi(A, B)} 2\O_{\tilde{T\times U_j}}\ra \fcal E'\ra0
\]
We claim that this construction coincides with taking the quotient by the subsheaf annihilated by $\cal I_{D_1}$. This follows from a diagram chasing on the following commutative diagram with exact rows and columns
\begin{equation*}
\begin{xy}
(105,20)*+{0}="35";
(35,15)*+{0}="23";
(105,10)*+{\cal K}="25";
(10,0)*+{0}="12";
(35,0)*+{ 2\O_{\tilde{T\times U_j}}}="13";
(75,0)*+{ 2\O_{\tilde{T\times U_j}}}="14";
(105,0)*+{\sigma^*\fcal F }="15";
(120,0)*+{ 0}="16";
(10,-15)*+{0}="02";
(35,-15)*+{2\O_{\tilde{T\times U_j}}(D_1)}="03";
(75,-15)*+{ 2\O_{\tilde{T\times U_j}}}="04";
(105,-15)*+{\fcal E'}="05";
(120,-15)*+{0.}="06";
(35,-25)*+{\cal K}="-13";
(105,-30)*+{0}="-15";
(35,-35)*+{0}="-23";
%arrows
%{\ar@{->}"11";"12"};
{\ar@{->}"12";"13"};
{\ar@{->}^-{\sigma^*(A+tB(t))}"13";"14"};
{\ar@{->}"14";"15"};
{\ar@{->}"15";"16"};
{\ar@{->}"02";"03"};
{\ar@{->}^-{\phi(A, B)}"03";"04"};
{\ar@{->}"04";"05"};
{\ar@{->}"05";"06"};
%{\ar@{->}"33";"23"};
{\ar@{->}"23";"13"};
{\ar@{->}_{\smat{s&0\\0&s}}"13";"03"};
{\ar@{->}"03";"-13"};
{\ar@{->}"-13";"-23"};
{\ar@{->}"35";"25"};
{\ar@{->}"25";"15"};
{\ar@{->}"15";"05"};
{\ar@{->}"05";"-15"};
{\ar@{=}"14";"04"};
\end{xy}
\end{equation*}
More precisely, one shows that $\cal K$ is exactly the subsheaf of $\sigma^*\fcal F$ annihilated by $s$.

Now the flatness of $\fcal E$  and the local freeness of $\cal E$ on its support follow from~\cite{IenaUnivCurve} (Lemma~5.1 and the discussion before it).
\end{proof}

\begin{rem}
Notice that the fibre of $\sigma^*\fcal F$ over $0\in T$ is not a $1$-dimensional sheaf.
Its support contains $D_1$.
The subsheaf $\cTor_{\cal I_{D_1}}(\sigma^*(\fcal F))$ of $\sigma^*(\fcal F)$ is the maximal subsheaf supported completely in $D_1$.
\end{rem}

\begin{rem}
The sheaf $\fcal E_0$ depends only on the derivative of $\gamma$ at $0$, i.~e., only on the induced map $\Tsp_0 T\ra \Tsp_{[\cal F]}\bb B_0$ of tangent spaces.
\end{rem}

\begin{df}
The sheaf $\fcal E_0$ on $D(Z')$ as above is called an $R$-bundle associated to the pair $Z\subset C$, $Z'=Z\cap \Sing C$.
\end{df}

The following generalizes Definition~5.5 from~\cite{IenaUnivCurve}.
\begin{df}
 Two $R$-bundles
 $\cal E_1$ and $\cal E_2$ associated to $Z\subset C$ on $D(\pt_1,\dots, \pt_r)$ are called equivalent if there exists an automorphism $\phi$
 of $D(\pt_1,\dots, \pt_r)$ that acts identically on the surface $D_0(\pt_1,\dots, \pt_r)$ such that $\phi^*(\cal E_1)\iso \cal E_2$.
\end{df}

\begin{tr}\label{tr: blow-up modification}
Assume that the components $F_1,\dots, F_r$ intersect transversally at $[\cal F]$. Then the equivalence classes of $R$-bundles associated to $Z\subset C$ are in one-to-one correspondence with the points of the product of projective lines $\prod_{j=1}^r\P N^{(j)}_{[\cal F]}$.
\end{tr}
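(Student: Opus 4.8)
\subsection*{Proof proposal}

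The plan is to reduce the classification to the one-point situation of \cite{IenaUnivCurve} and then to reassemble the local invariants over the distinct points $\pt_1,\dots,\pt_r$, using transversality only at the very end to guarantee that every tuple is realized. First I would exploit that the points of $Z'=\{\pt_1,\dots,\pt_r\}$ are pairwise distinct, so that the blow-up centre $\{0\}\times Z'$ is a disjoint union of points. Consequently the exceptional divisor $D_1=\bigsqcup_j D_1(\pt_j)$ and the special fibre $D(Z')=D_0(Z')\cup\bigcup_j D_1(\pt_j)$ decompose into independent local pieces: on $D_0(Z')=\Bl_{Z'}\P_2$ the sheaf $\fcal E_0$ restricts to the transform of $\cal F$, which is independent of $\gamma$, while over each $\pt_j$ the construction of $\fcal E$ is, by the local computation in the preceding lemma, literally the one-point construction of \cite{IenaUnivCurve} applied to the germ $\gamma_j=\rho_j\circ\gamma$. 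Hence $\fcal E_0$ is completely determined by the collection of its restrictions to the components $D_1(\pt_j)$.

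Next I would invoke the one-point classification. The restriction of $\fcal E_0$ to $D_1(\pt_j)\iso\P_2$, taken up to automorphisms of $D_1(\pt_j)$ fixing the exceptional line $L_j$, is by \cite{IenaUnivCurve} (the one-point case of the present Definition of an $R$-bundle) classified by a point of $\P N_{(C,\pt_j)}$, and by the explicit local model this point is exactly the image of the derivative $\gamma_j'(0)$, that is, of the projection of $\gamma'(0)$ to $N_{(C,\pt_j)}$. Composing with the isomorphism $N^{(j)}_{[\cal F]}\iso N_{(C,\pt_j)}$ of Lemma~\ref{lemma: iso on normal}, I obtain a point $\ell_j(\gamma)\in\P N^{(j)}_{[\cal F]}$; the preceding remark, that $\fcal E_0$ depends only on $\gamma'(0)$, ensures this is the only datum entering. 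Thus the equivalence class of $\fcal E_0$ is recorded by the tuple
\[
(\ell_1(\gamma),\dots,\ell_r(\gamma))\in\prod_{j=1}^r\P N^{(j)}_{[\cal F]}.
\]
This tuple is a complete invariant of the equivalence class: an automorphism of $D(Z')$ fixing $D_0(Z')$ restricts on each $D_1(\pt_j)$ to an automorphism fixing $L_j$, and such automorphisms act on each factor precisely by the scaling defining $\P N^{(j)}_{[\cal F]}$, so the tuple is unchanged; conversely any choice of component automorphisms fixing the $L_j$ glues (trivially along $D_0(Z')$) to a global automorphism, so two germs with the same tuple produce equivalent $R$-bundles.

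It remains to show every tuple occurs, and here I would use transversality. Since $F_1,\dots,F_r$ meet transversally at $[\cal F]$ and, by Lemma~\ref{lemma: iso on normal}, $N^{(j)}_{[\cal F]}$ is the normal space $\Tsp_C F/\Tsp_C F_j'$, the combined projection
\[
\Tsp_{[\cal F]}\bb B_c\longrightarrow\bigoplus_{j=1}^r N^{(j)}_{[\cal F]}
\]
is surjective. Hence for any $(\bar v_1,\dots,\bar v_r)$ with each $\bar v_j\neq 0$ there is a tangent vector realized as $\gamma'(0)$ for some germ $\gamma$; since $\bar v_j\neq 0$, this $\gamma$ is not tangent to any $S_j$, so its $R$-bundle is defined and has invariant the prescribed point of $\prod_j\P N^{(j)}_{[\cal F]}$. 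Together with the injectivity from the previous step this yields the asserted bijection.

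The main obstacle is the second step: extracting from the local resolution $0\to 2\O_{\tilde{T\times U_j}}(D_1)\xra{\phi(A,B)}2\O_{\tilde{T\times U_j}}\to\fcal E'\to 0$ the precise statement that the isomorphism class of the restriction to $D_1(\pt_j)$, modulo automorphisms fixing $L_j$, is faithfully encoded by the normal direction $\gamma_j'(0)\in\P N_{(C,\pt_j)}$, i.e.\ checking that the one-point analysis of \cite{IenaUnivCurve} transports verbatim through the identifications of Lemma~\ref{lemma: iso on normal}. By contrast, the role of transversality is confined to surjectivity in the last step: without it the combined projection need not be onto, so only a proper subvariety of $\prod_j\P N^{(j)}_{[\cal F]}$ would be realized, exactly as the final remark of Section~\ref{subsection: generic fibres} already signals.
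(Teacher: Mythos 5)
Your proposal is correct and follows essentially the same route as the paper: the local classification at each $D_1(\pt_j)$ via \cite[Proposition~5.6]{IenaUnivCurve} transported through Lemma~\ref{lemma: iso on normal}, combined with transversality of $F_1,\dots,F_r$ to get surjectivity onto $\prod_{j=1}^r\P N^{(j)}_{[\cal F]}$, i.e.\ independence of the local classes at the different exceptional planes. You merely spell out in more detail two points the paper leaves implicit --- the decomposition of $\fcal E_0$ into independent local pieces over the distinct points of $Z'$, and the verification that the tuple is a complete invariant under automorphisms of $D(Z')$ fixing $D_0(Z')$ --- which is a faithful elaboration rather than a different argument.
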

\begin{proof}
The classes of $R$-bundles around  $D_1(\pt_j)$ are parameterized by $\P N_{(C, \pt_j)}\iso \P N^{(j)}_{[\cal F]}$ by~\cite[Proposition~5.6]{IenaUnivCurve} and Lemma~\ref{lemma: iso on normal}.
Since the components $F_1,\dots, F_r$ intersect transversally, the map
\[
\Tsp_{[\cal F]}\bb B_c\setminus \cup_{j=1}^r \Tsp_{[\cal F]} S_j\xra{}\prod_{j=1}^r\P N^{(j)}_{[\cal F]}
\]
is surjective, which means that the class of an $R$-bundle around a given exceptional plane $D_1(\pt_j)$ is independent of the class of the $R$-bundle around other planes $D_1(\pt_i)$, $i\neq j$. This concludes the proof.
\end{proof}

\begin{rem}
The assumption of Theorem~\ref{tr: blow-up modification} is always satisfied at least for $r\le 3$ by Remark~\ref{rem: transversal intersection 3}.
In particular it is the case for $d=4$. Remark~\ref{rem: transversal intersection 3} also implies that the locus of $[\cal F]\in \bb B_c$ that do not satisfy the assumption on transversality lies at least in codimension $7$.
\end{rem}

\begin{rem}
All possible $R$-bundles can be produced simultaneously as fibres of a family of sheaves over (an open subset of) $\tilde M\defeq \Bl_{M'} M$. 
Indeed, pull back the universal family over $M$ to a family $\fcal{U}_{\tilde M}$ over $\tilde M$.
Let 
$\tilde{\tilde M\times \P_2}\xra{}\tilde M\times \P_2$ be the blow up along the subvariety in $\tilde M\times \P_2$ where $\fcal{U}_{\tilde M}$ is singular, pull $\fcal{U}_{\tilde M}$ back and consider the quotient  $\tilde{\fcal U}$ by the subsheaf annihilated by the ideal sheaf of the exceptional divisor of this blow-up.

Let $\bb B_{gen}$ denote the open locus of those $[\cal F]\in \bb B_0$ satisfying the conditions of Theorem~\ref{tr: blow-up modification}. 
Then $\tilde{\bb B}_{gen}\defeq \Bl_{M'\cap \bb B_{gen}} \bb B_{gen}$ is smooth  by
Theorem~1.2 and Theorem~1.3 from~\cite{LiWonderful}.
The fibre of the exceptional divisor over $[\cal F]\in \bb B_{gen}$ coincides with the product  $\prod_{j=1}^r\P N^{(j)}_{[\cal F]}$. The restriction of 
$\tilde{\fcal U}$ to $\tilde{\bb B}_{gen}$ is a flat family of $1$-dimensional non-singular sheaves: non-singular $(dm-1)$-sheaves together with $R$-bundles associated to $Z\subset C$, $Z\cap \Sing C\neq \emptyset$.

This allows us to see the blow-up $\tilde{\bb B}_{gen}$ as a process that substitutes the boundary $M'\cap \bb B_{gen}$ by a divisor consisting of non-singular $1$-dimensional sheaves, which generalizes the construction from~\cite{IenaUnivCurve}.
\end{rem}

\subsection{Further work.}
A more detailed discussion of the relation of the blow-up $\Bl_{M'}M$ with modifying the boundary of $M$ by vector bundles should follow in a separate paper.

\appendix
\section{On the ideals of points on planar curves}\label{section: commutative algebra}
Let $R=\O_{C, p}$ be a local $\k$-algebra of a curve $C$ at point $p\in C$.
Let $I\subset R$ be an ideal of $R$. As a submodule of a free module, $I$ is a torsion free $R$-module. If $R$ is regular, i.~e., if $p$ is a smooth point of $C$, then $I$ is free. Therefore, the non-regularity of $R$ is a necessary condition for the non-freeness of $I$.

\subsection{Ideals of simple points on a curve}
Let $\frak m=\frak m_{C, p}$ be the maximal ideal of $R$ and let
 $\k_p=R/\frak m$ be the local ring of the structure sheaf of the one point subscheme $\{p\}\subset C$.

% An $R$-module homomorphism $R\xra{\varphi} \k_p$ is uniquely defined by $\varphi(1)=\lambda\in \k_p$. Then $\varphi(s)=\bar s\cdot \lambda$. If $\varphi$ is different from zero, then the kernel of $\varphi$ coincides with $\frak m$.

\begin{lemma}\label{lemma:ideal point}
Consider an  exact sequence of $R$-modules.
\[
0\ra \frak m\ra R\ra \k_p\ra 0
\]
with a non-zero $R$-module $\frak m$.
Then $\frak m$ is free if and only if $R$ is regular.
\end{lemma}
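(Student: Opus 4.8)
The plan is to prove the two implications separately; the substantive one is ``$\frak m$ free $\Rightarrow R$ regular''. The converse is immediate and is essentially the remark opening the appendix: if $R$ is regular then, as the local ring of a curve, it is a one-dimensional regular local ring and hence a discrete valuation ring, so $\frak m=(t)$ is generated by a uniformizer $t$; since $R$ is then a domain, $t$ is a non-zero-divisor and $1\mto t$ defines an isomorphism $R\iso\frak m$, exhibiting $\frak m$ as free of rank one.

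For the forward direction I would assume $\frak m$ is free and deduce regularity by computing the embedding dimension $\dim_{\k}\frak m/\frak m^2$ and comparing it with the Krull dimension $\dim R$, which equals $1$ because $C$ is a curve. Since $\frak m\neq 0$ we have $\frak m\iso R^{n}$ for some $n\ge 1$; applying $-\ten_R\k_p$ gives $\frak m/\frak m^2\iso\k_p^{\,n}$, so the embedding dimension is exactly $n$. The whole problem is thus reduced to showing $n=1$, after which $\dim_{\k}\frak m/\frak m^2=1=\dim R$ forces $R$ to be regular.

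The key step is to determine $n$ by localizing at a minimal prime. As $\dim R=1$, the maximal ideal $\frak m$ is not minimal, so every minimal prime $\frak p$ satisfies $\frak p\subsetneq\frak m$; fix one. On the one hand $\frak m\not\subseteq\frak p$, so $\frak m R_{\frak p}=R_{\frak p}$; on the other hand localization is exact, so $\frak m R_{\frak p}\iso R_{\frak p}^{\,n}$. Hence $R_{\frak p}^{\,n}\iso R_{\frak p}$ as $R_{\frak p}$-modules. Because $\frak p$ is a minimal prime, $R_{\frak p}$ has dimension $0$ and is a nonzero Artinian local ring, of finite positive length $\ell$; taking lengths in $R_{\frak p}^{\,n}\iso R_{\frak p}$ yields $n\ell=\ell$, whence $n=1$.

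The main obstacle is precisely that $R$ need not be a domain, since the curve may be singular, reducible or non-reduced at $p$, so one cannot speak naively of ``the rank'' of $\frak m$ inside a field of fractions. Passing to the Artinian localization $R_{\frak p}$ and counting lengths is what replaces the missing rank argument, and it is the step I would take most care to justify. As an alternative one may argue homologically: the sequence $0\ra\frak m\ra R\ra\k_p\ra0$ shows that $\frak m$ free implies $\mathrm{pd}_R\k_p\le 1<\infty$, and by the Auslander--Buchsbaum--Serre theorem finiteness of the projective dimension of the residue field characterizes regular local rings; I would present the elementary length argument as the main proof and mention this homological route as a one-line alternative.
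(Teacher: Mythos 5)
Your proof is correct, and its skeleton is the same as the paper's: both directions reduce, respectively, to showing that a free $\frak m$ forces $\frak m\iso R$ (hence $\frak m$ principal, hence $R$ regular, since $\dim R=1$), and that over a one-dimensional regular local ring --- a discrete valuation ring --- the maximal ideal is free. The one genuine difference lies exactly where the paper is terse: the paper dismisses the possibility $\frak m\iso R^n$, $n\ge 2$, with the parenthetical remark ``otherwise $\frak m\ra R$ would not be injective'', which is precisely the nontrivial fact that $R^n$ cannot embed into $R$ for $n\ge 2$ over a ring that need not be a domain (the curve may be reducible or non-reduced at $p$). You supply the missing justification: since $\dim R=1$, any minimal prime $\frak p$ is strictly contained in $\frak m$, so $\frak m_{\frak p}=R_{\frak p}$, while exactness of localization gives $\frak m_{\frak p}\iso R_{\frak p}^{\,n}$; comparing lengths of the Artinian local ring $R_{\frak p}$ forces $n=1$, and then $\dim_{\k}\frak m/\frak m^2=n=1=\dim R$ yields regularity. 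This buys a complete argument with no reducedness or irreducibility assumptions, at the cost of a little more machinery than the paper's one-line assertion; your alternative route via $\mathrm{pd}_R\,\k_p\le 1$ and Auslander--Buchsbaum--Serre is likewise valid and is arguably the cleanest formulation. Your converse (DVR, uniformizer $t$ a non-zero-divisor, $r\mto rt$ an isomorphism $R\iso\frak m$) is the paper's torsion-free-over-a-DVR argument made explicit.
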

\begin{proof}
If $\frak m$ is free, then  $\frak m\iso R$ (otherwise $\frak m\ra R$ would not be injective)  and we obtain an exact sequence of $R$-modules
\[
0\ra R\ra R\ra \k_p\ra 0,
\]
which
means  that the maximal ideal $\frak m$ of $p$  is generated by one element. Therefore,  $R$ is regular in this case.

Vice versa, assume  $R$ is regular. Notice that $\frak m$ is always a torsion free $R$-module as a submodule of $R$. Therefore, if $R$ is regular, $\frak m$ is free as a torsion free module over a regular one-dimensional local ring.
\end{proof}

\subsection{Ideals of fat curvilinear points on a planar curve}

Assume that $C$ is a planar curve locally defined as the zero locus of $f\in\k[x, y]$. Assume  $p=0$, consider  a  fat curvilinear point $Z$ at $p$ given by the ideal
\[
(x-h(y), y^n)\subset \k[x, y], \quad h(y\in \k[y]),\quad  h(0)=0,\quad \deg h < n,
\]
and assume that
 $Z$ is  a subscheme of $C$. Let $I\subset R$ be its ideal.

Since we assumed $Z\subset C$, one can write
\[
f=\det\pmat{x-h(y)&y^n\\u(y)&v(x,y)}, \quad u(y)\in \k[y], \quad v(x, y)\in \k[x,y].
\]

\begin{lemma}\label{lemma:ideal fat curv point}
Keeping the notations as above, let $R$ be a non-regular ring, i.~e., let $p$ be a singular point of $C$.
Then  $I$ is non-free if and only if $u(0)=0$.

If $I$ is free, then it is generated by $x-h(y)$ and there is an isomorphism
\[
R \iso I,\quad  r\mto r\cdot(x-h(y)).
\]
\end{lemma}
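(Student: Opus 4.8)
The plan is to produce an explicit two--term $R$--presentation of $I$ and to read off the minimal number of generators of $I$ from the fibre of this presentation at $p$. Write $a=x-h(y)$ and $b=y^n$, so that $I=(\bar a,\bar b)\subset R$ is the image of $J=(a,b)$, and recall from the factorization $f=\det\pmat{a&b\\u&v}=av-bu$ that the reductions satisfy the additional relation $\bar v\bar a-\bar u\bar b=\bar f=0$ in $R$.

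First I would determine the module of syzygies of $(\bar a,\bar b)$ over $R$. The two visible syzygies are the Koszul relation $(b,-a)$ and the relation $(v,-u)$ forced by $f$; the claim is that these generate all syzygies, giving an exact sequence
\[
R^2\xra{\ M^{\mathrm{adj}}\ }R^2\xra{(a,b)}I\ra 0,\qquad M^{\mathrm{adj}}=\pmat{v&-b\\-u&a}.
\]
To prove generation I would lift to $S=\poly{}_{\frak m}$: if $\bar s\bar a+\bar t\bar b=0$ in $R$, then $sa+tb=w(av-bu)$ in $S$ for some $w$, hence $(s-wv)a+(t+wu)b=0$ in $S$; since $a,b$ is a regular sequence in $S$ (one has $S/(a)\iso\k[y]_{(y)}$, in which $b=y^n$ is a nonzerodivisor), its syzygies are purely Koszul, so $(s-wv,t+wu)=\lambda(b,-a)$ and therefore $(s,t)=w(v,-u)+\lambda(b,-a)$, which reduces to the asserted generators. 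This generation statement is the key step and the main obstacle of the argument.

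Next I would compute $\dim_\k I/\frak mI=\dim_\k\Coker\bigl(M^{\mathrm{adj}}(p)\bigr)=2-\operatorname{rank}M^{\mathrm{adj}}(p)$ by evaluating the presentation at $p=0$. Since $a(0)=b(0)=0$, one gets $M^{\mathrm{adj}}(0)=\pmat{v(0)&0\\-u(0)&0}$, whose rank is $1$ precisely when $(u(0),v(0))\neq(0,0)$. Here the hypothesis enters: differentiating $f=av-bu$ gives $\partial_xf(0)=v(0)$ and (for $n\ge2$) $\partial_yf(0)=-h'(0)v(0)$, so $p$ is a singular point of $C$, i.e.\ $R$ is non-regular, exactly when $v(0)=0$. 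Assuming this, $\operatorname{rank}M^{\mathrm{adj}}(0)=1$ iff $u(0)\neq0$, and $=0$ iff $u(0)=0$.

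Finally I would translate the two cases. If $u(0)\neq0$, then $\dim_\k I/\frak mI=1$, so $I$ is principal; moreover $u$ is a unit in $R$, so the relation $\bar v\bar a=\bar u\bar b$ gives $\bar b\in(\bar a)$ and hence $I=(\bar a)=(\overline{x-h(y)})$. Since $a=x-h(y)$ is irreducible and cannot divide $f$ (it cannot divide $bu=y^nu(y)\in\k[y]$, a nonzero element with no $x$), its class $\bar a$ is a nonzerodivisor, so $r\mapsto r\cdot(x-h(y))$ is the asserted isomorphism $R\iso I$ and $I$ is free. If instead $u(0)=0$, then $\dim_\k I/\frak mI=2$; as $I$ embeds into $R$ it has rank at most $1$ and hence cannot be free of rank $2$, so $I$ is non-free. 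This yields the equivalence ``$I$ non-free $\iff u(0)=0$'' together with the stated description of the free case.
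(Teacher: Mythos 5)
Your argument is correct, but it takes a genuinely different route from the paper's. The paper works directly with ideal membership: it reduces the question to whether $(x-h(y),y^n,f)=(\xi,f)$ in $\k[x,y]_{(x,y)}$, embeds into $\k[[x,y]]$, and derives a contradiction from $u(0)=0$ by tracking orders of vanishing along the substitution $x=h(y)$; non-regularity enters there as $\ord f\ge 2$, and the passage from ``one-generated'' to ``free'' is handled at the end exactly as you do. You instead exhibit the finite free presentation $R^2\xra{M^{\mathrm{adj}}}R^2\to I\to 0$ --- which is precisely the matrix factorization $MM^{\mathrm{adj}}=f\cdot\id$ underlying the $2$-periodic resolution of $I$ over the hypersurface ring $R$ --- prove its exactness by lifting syzygies to the regular local ring $S$ and using that $(a,b)$ is a regular sequence there, and then read off the minimal number of generators $\dim_\k I/\frak m I=2-\operatorname{rank}M^{\mathrm{adj}}(0)$ by Nakayama. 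Non-regularity enters your argument through the identity $\partial_x f(0)=v(0)$, so $p\in\Sing C$ forces $v(0)=0$; note that only this implication is needed, so the $n\ge 2$ caveat on $\partial_y f$ is harmless (and $n\ge 2$ is implicit in ``fat'' anyway, simple points being treated in the preceding lemma). Your approach buys more than the paper's: it computes $\dim_\k I/\frak m I$ in all cases, makes visible that regularity of $R$ is exactly $v(0)\neq 0$, and is the systematic (matrix-factorization) way to treat such questions, whereas the paper's order bookkeeping is more elementary and self-contained but ad hoc. Two small points to tighten: in the non-free case the phrase ``rank at most $1$'' is loose since $R$ may be non-reduced; the clean justification that a nonzero ideal cannot be free of rank $\ge 2$ is that any two elements $s,t\in R$ satisfy the nontrivial relation $t\cdot s-s\cdot t=0$ (or invoke McCoy's theorem that $R^2$ does not embed in $R$), and rank $0$ is excluded because $\bar a\neq 0$ in $R$ (as $\ord f\ge 2>\ord a$). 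With those one-line remarks added, your proof is complete.
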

\begin{proof}
First of all we shall show that $I$ is generated by one element if and only if $u(0)\neq 0$.

Since $R=\k[x, y]_{(x, y)}/(f)$, it is enough to answer the question when the ideal
\[
(x-h(y), y^n, f)\subset \k[x, y]_{(x, y)}
\]
  equals $(\xi, f)$ for some $\xi\in \k[x, y]_{(x, y)}$.

\noindent``$\Leftarrow$'': If $u(0)\neq 0$, then $(x-h(y), y^n, f)=(x-h(y), f)$.

\noindent``$\Rightarrow$'': Let  $(x-h(y), y^n, f)=(\xi, f)$,  without loss of generality we can assume that
\[
\xi\in(x-h(y), y^n)
\]
and
\[
\xi=a\cdot(x-h(y))+ b\cdot y^n
\]
for some $a, b\in \k[x, y]_{(x, y)}$.
In order to show that  $u(0)\neq 0$, we suppose that the contrary holds true, i.~e., $u(0)=0$.

Since one can embed $\k[x, y]_{(x, y)}$ into the ring of formal power series $\k[[x, y]]$, we are going to consider the elements of $\k[x, y]_{(x, y)}$ as power series in $x, y$.

Since $x-h(y)\in (\xi, f)$, then $x-h(y)=c\cdot \xi + d\cdot f=ca(x-h(y))+cby^n+df$. As the orders of $cby^n$ and $df$ are at least $2$, we conclude that $a$ and $c$ are units. We can assume without loss of generality that
\[
\xi=x-h(y)+\eta(x, y), \quad \ord \eta(h(y), y)\ge n.
\]
As $y^n\in (\xi, f)$, it must hold $y^n=C\xi+Df$ for some $C$ and $D$. Since by our assumption  $u(0)=0$, evaluating this equality at $x=h(y)$, we conclude that
\[
y^n=C(h(y), y)\cdot \eta(h(y), y)+D(h(y), y)\cdot (-u(y)\cdot y^n).
\]
Therefore, $C$ must be a unit and  $\ord \xi(h(y),y)=\ord \eta(h(y), y)=n$.

Substituting $x$ by $h(y)$ in the equality $x-h(y)=c\cdot \xi + d\cdot f$, we get
\[
0=c(h(y), y)\cdot \eta(h(y),y) + d(h(y), y)\cdot (-u(y)\cdot y^n).
\]
Since $c$ is a unit, it contains a non-zero constant term and hence the product  $c(h(y), y)\cdot \eta(h(y),y)$ has order $n$. On the other hand, since $u(0)=0$ by our assumption, the order of
$d(h(y), y)\cdot u(y)y^n$ is at least $n+1$. We obtain a contradiction, which shows that our assumption was wrong.

Notice that if $I$ is free, then it must be one-generated. On the other hand we see that if $I$ is one-generated, then it is generated by $x-h(y)$. In this case $u(0)\neq 0$ and therefore $f$ is not divisible by $x-h(y)$. Thus $x-h(y)$ is not a zero divisor in $R$ and there is an isomorphism $R\iso I$, $r\mto rx$. The latter means that $I$ is one-generated if and only if it is free, which concludes the proof.
\end{proof}

%\bibliographystyle{plain}
%\bibliography{literature}
\def\cprime{$'$} \def\cprime{$'$} \def\cprime{$'$}

\end{document}